\newtheorem{theorem}{Theorem}[section]
\newtheorem{sametheorem}{Theorem}
\newtheorem{lemma}[theorem]{Lemma}
\newtheorem{proposition}[theorem]{Proposition}
\newtheorem{remark}[theorem]{Remark}
\newtheorem{definition}[theorem]{Definition}
\newtheorem{corollary}[theorem]{Corollary}
\newcommand{\sect}[1]{\section{#1} \setcounter{equation}{0} }
\newcounter{ca}
\newcommand{\case}[1]{\vspace{0.5em} \noindent {\scshape Case \theca:} \addtocounter{ca}{1}  #1  \vspace{0.5em}}
\newcommand{\norm}[2]{\left\|#1\right\|_{#2}}
\newcommand{\ds}{\displaystyle}
\newcommand{\Pn}{\mathbb P_n}
 \newcommand{\ec}{\end{comment}}
\newcommand{\bc}{ \begin{comment}
 }
\newcommand{\F}{{\mathcal F}}
\newcommand{\E}{{\mathcal E}}
\newcommand{\A}{{\mathcal A}}
\newcommand{\andd}{\quad\mbox{\rm and}\quad}
\newcommand\e{{\varepsilon}}
\newcommand\w{{\omega}}
\def\be  {\begin{equation}}
\def\ee  {\end{equation}}
\def\ba  {\begin{eqnarray}}
\def\ea  {\end{eqnarray}}
\def\baa {\begin{eqnarray*}}
\def\eaa {\end{eqnarray*}}
\newenvironment{comment}[2]
{\bgroup\vspace{7pt}
\begin{tabular}{|p{5in}|}
\hline \qquad \bf \footnotesize Comment -- to be deleted in the final version \\
\hline
\quad\sl\footnotesize #1#2} {\\ \hline \end{tabular}
\vspace{7pt}\indent\egroup}
\def\updots{\mathinner{\mkern
1mu\raise 1pt \hbox{.}\mkern 2mu \mkern 2mu \raise
4pt\hbox{.}\mkern 1mu \raise 7pt\vbox {\kern 7 pt\hbox{.}}} }
\def \meas{\mathop{\rm meas}\nolimits}
\def \const{\mathop{\rm const}\nolimits}
\newcommand{\tilt}{\tilde t}
\newcommand{\B}{\mathbb B}
\newcommand{\C}{C}
\newcommand{\R}{\mathbb R}
\newcommand{\N}{\mathbb N}
 \renewcommand{\L}{\mathbb L}
\newcommand{\ineq}[1]{{\rm(\ref{#1})}}
\newcommand{\ie}{{\em i.e., }}
\newcommand{\eg}{{\em e.g., }}
\newcommand{\bpic}{
%\begin{scriptsize}
\begin{center}
}
\newcommand{\epic}{
\endpspicture
\end{center}
%\end{scriptsize}
}
\newcommand{\st}{\;\; \big| \;\;}
\renewcommand{\L}{\mathbb L}
\newcommand{\Lp}{\L_p}
\newcommand{\Lq}{\L_q}
 \newcommand{\AC}{\mathrm{AC}}
  \newcommand{\loc}{\mathrm{loc}}
\newcommand{\wkr}{\w_{k,r}^\varphi}
\newcommand{\wkrav}{\w_{k,r}^{*\varphi}}
 \newcommand{\Dom}{{\mathfrak{D}}}
  \newcommand{\Ddom}{\mathrm{Dom}}
\newcommand{\ddelta}{\mu(\delta)}
 \newcommand{\wt}{{\mathcal{W}}}
\newcommand{\thm}[1]{Theorem~\ref{#1}}
\newcommand{\lem}[1]{Lemma~\ref{#1}}
\newcommand{\cor}[1]{Corollary~\ref{#1}}
\newcommand{\prop}[2]{Proposition~\ref{#1}(\ref{#2})}
\title{{\sc New moduli of smoothness: weighted DT moduli revisited and applied}
\thanks{{\it AMS classification:} 41A10, 41A17, 41A25. {\it Keywords
and phrases:} Approximation by polynomials in the $L_p$-norm, Degree
of approximation, Jackson-type estimates, moduli of smoothness. }
\thanks{Part of this work was done while the first two authors were
at the Centre de Recerca Matem\`atica, Barcelona.} }
\date{April 21, 2014}
\author{
K. A.  Kopotun\thanks{Department of Mathematics, University of
Manitoba, Winnipeg, Manitoba, R3T 2N2, Canada ({\tt
kopotunk@cc.umanitoba.ca}). Supported by NSERC of Canada.} , D.
Leviatan \thanks{Raymond and Beverly Sackler School of Mathematical
Sciences, Tel Aviv University, Tel Aviv 69978, Israel ({\tt
leviatan@post.tau.ac.il}).} \ and I. A. Shevchuk
\thanks{Faculty of Mechanics and Mathematics, National Taras
Shevchenko University of Kyiv, 01033 Kyiv, Ukraine ({\tt
shevchuk@univ.kiev.ua}).} }
\begin{document}

\maketitle

\abstract{We introduce new moduli of smoothness for functions
$f\in L_p[-1,1]\cap C^{r-1}(-1,1)$, $1\le p\le\infty$,
$r\ge1$, that have an $(r-1)$st locally absolutely continuous derivative in $(-1,1)$, and such that
$\varphi^rf^{(r)}$ is in $L_p[-1,1]$, where
$\varphi(x)=(1-x^2)^{1/2}$. These moduli are equivalent to certain
weighted DT moduli, but our definition is more transparent and
simpler. In addition, instead of applying these weighted moduli to
weighted approximation, which was the purpose of the original DT
moduli, we apply these moduli to obtain Jackson-type estimates on
the approximation of functions in $L_p[-1,1]$ (no weight), by means
of algebraic polynomials.
Moreover, we also prove matching inverse theorems thus obtaining
constructive characterization of various smoothness classes of functions via the degree of their approximation by algebraic polynomials.}

\sect{Motivation}

The purpose of this section is to provide some motivation to the introduction of the new moduli of smoothness that we discuss in this paper.

We start with a  simple example. Suppose that  $A_p^\alpha$ is the space of all functions in $\Lp[-1,1]$, $1\leq p \leq \infty$, such that their rate of approximation by algebraic polynomials of degree $<n$ in the $\Lp$-norm is $O(n^{-\alpha})$. How can we characterize this approximation space? The answer is very well known by now. There are several approaches but the ones that became most popular in recent decades involve moduli of smoothness of Ivanov
$\tau_k(f, \delta(t,\cdot))_{q,p}$ (introduced in 1980-1981) and Ditzian-Totik $\omega_k^\varphi(f,t)_p$ (introduced around 1984). The Ditzian-Totik (DT) modulus is defined in \ineq{wkrdefinition} by letting  $r=0$ (see also Remark~\ref{remarkdt}), and the Ivanov modulus (see \cite[Section 16]{d20}, for example) is given by
\[
\tau_k(f, \delta(t,\cdot))_{q,p} = \left\|   \omega_k(f, \cdot, \delta(t,\cdot))_q \right\|_p \, ,
\]
where
\[
\omega_k(f, x, \delta(t,x))_q^q = \frac{1}{2\delta(t,x)} \int_{-\delta(t,x)}^{\delta(t,x)}
\left| \Delta_{\nu}^{k}(f, x) \right|^q\, d\nu \, .
\]
It turns out (see \eg \cite{d20}) that
$\w_k^\varphi(f,t)_p \sim \tau_k(f, \Delta (t,\cdot))_{p,p}$  with $\Delta (t, \cdot) := t\varphi(\cdot)  +t^2$, but, according to \cite[p. 142]{d20}, ``The [Ivanov] moduli ... are a somewhat more cumbersome method to describe smoothness than ... [DT moduli], and their computation is more difficult.''

It follows from \cite[Theorems 7.2.1 and 7.2.4]{dt} that, for $0<\alpha<k$,
\be \label{eqdt}
f\in A_p^\alpha \quad \iff \quad \w_k^\varphi(f,t)_p = O(t^\alpha) , \; t>0 .
\ee

A natural question now is what can be said about smoothness of the derivatives of functions from $A_p^\alpha$. Surely, if $\alpha$ is large enough, then functions from
$A_p^\alpha$ have to be differentiable (or rather, almost everywhere, they coincide with functions which are differentiable). Note that \ineq{eqdt} does not explicitly describe the behavior of these derivatives (but see Remark~\ref{auxrem} below).
While it is true that $\w_k^\varphi(f,t)_p \leq c t^r \w_{k-r}^\varphi(f^{(r)},t)_p$, it is NOT true that, for appropriate $\alpha$, $f\in A_p^\alpha$ only if $\w_{k-r}^\varphi(f^{(r)},t)_p= O(t^{\alpha-r})$.
One needs to replace $\w_{k-r}^\varphi(f^{(r)},t)_p$ with an appropriated weighted modulus (as we show in Section~\ref{sec8}). This is very different from the trigonometric case where the classical moduli of smoothness are used in analogous results on characterization of (trigonometric) approximation spaces.

%
%
%Additionally, suppose that we now   consider functions from $A_p^\alpha$ which are, say, convex and ask whether or not one can approximate these functions by convex algebraic polynomials of degree $<n$ with the same rate of approximation as by unconstrained polynomials, \ie $O(n^{-\alpha})$ (we mention that this is still an open problem for $p<\infty$ and large $\alpha$, and that it has been resolved for $p=\infty$). If for convex functions $f\in L_p[-1,1]$, $p<\infty$, the estimate
%\[
%\inf_{\mbox{\rm\scriptsize convex $p_n$ of degree $<n$}} \|f-p_n\|_{\Lp[-1,1]} \leq c \w_k^\varphi(f, n^{-1})_p,
%\]
%were valid, the (affirmative) answer would immediately follow from \ineq{eqdt}. However, it is known that this estimate is NOT valid if $k>3$, and one needs to use Jackson-type estimates involving certain moduli of derivatives of $f$ (see further discussions of the case $p=\infty$ in \cite[Section 3]{klps}). Whether or not these Jackson-type estimates are valid is still an open question, but the current paper makes it clear what form these estimates should take.

\begin{remark} \label{auxrem}
To be more precise, we mention that \ineq{eqdt} implicitly  describes   the behavior of the derivatives of $f$ since it follows from \cite[Theorem 6.2.2 and Corollary 6.3.2]{dt} (\cite[Corollary 6.3.2]{dt} has  a couple of misprints, but this can be easily rectified) that, for $1\leq r<\alpha$,
\begin{eqnarray*}
\w_k^\varphi(f,t)_p = O(t^\alpha) &\iff& \omega_{\varphi}^{k-r}(f^{(r)},t)_{\varphi^r,p} = O(t^{\alpha-r}) \\
& \iff& \Omega^{k-r}_\varphi(f^{(r)},t)_{\varphi^r,p} = O(t^{\alpha-r}) .
\end{eqnarray*}
However,  if $t^\alpha$ is replaced by a more complicated function and, correspondingly, $A_p^\alpha$ is replaced by the space of functions $A_p(\phi)$ whose rate of approximation is $O(\phi(1/n))$, then \cite[Theorem 6.2.2 and Corollary 6.3.2]{dt} may no longer provide any useful information, and it becomes much harder to get an explicit description of the behavior of the derivatives of functions from $A_p(\phi)$.
For example, if $\phi(t) := t/(\ln(t/2))^2$, then
\[
f\in A_p(\phi) \quad \iff \quad \w_2^\varphi(f,t)_p = O(\phi(t))  ,
\]
and \cite[Theorem 6.3.1 (a)]{dt} implies that $f'$ is locally absolutely continuous and $\Omega_\varphi(f',t)_{\varphi,p} = O(-1/\ln(t/2))$. However,
\cite[Theorem 6.2.2]{dt} does not give any information about
$\omega_{\varphi} (f',t)_{\varphi,p}$ other than that it is bounded below by $\Omega_\varphi(f',t)_{\varphi,p}$. Hence, there is a need for an inverse theorem for an algebraic approximation explicitly involving derivatives of functions as in the classical trigonometric case (see \cite[Theorem 6.1.3]{ti-book}, for example). We prove such a theorem  in Section~\ref{sec8} (see Theorem~{\rm\ref{Theorem 3.199}}). In particular, it implies that
\[
f\in A_p(\phi) \quad \Longrightarrow \quad \omega_{1,1}^\varphi(f',t)_p = O(-1/\ln(t/2))  .
\]
\end{remark}

In summary, there is a need for a new measure of smoothness and/or new results that would help resolve the above mentioned problems. The purpose of this paper is to introduce new moduli serving this purpose.
These moduli are equivalent to certain weighted DT moduli, but, to rephrase \cite[p. 142]{d20}, ``these weighted DT moduli are a somewhat more cumbersome method to describe smoothness than our moduli, and their computation is more difficult'' (see Section~\ref{sec5} for the exact definition of these weighted DT moduli).

\sect{Introduction and definitions}
As alluded to above, we are interested in the constructive characterization of the functions in $L_p[-1,1]$, $1\le p<\infty$ and $C[-1,1]$ when $p=\infty$, with given degree of
approximation by algebraic polynomials, which is analogous to the characterization of periodic functions in $L_p[-\pi,\pi]$, respectively, $C[-\pi,\pi]$, with given degree of
approximation by trigonometric polynomials. Our characterization yields information on the smoothness of the derivatives of the approximated functions, and is described in
Sections \ref{sec5} and \ref{sec8} by means of direct and inverse theorems relating certain weighted DT moduli of smoothness of a function $f\in L_p[-1,1]$, respectively,
$f\in C[-1,1]$, to its degrees of best unweighted approximation in the space.

The first sections are devoted to introducing the above mentioned DT moduli of smoothness in a new, equivalent form, which is more transparent and simpler. We
prove the equivalence via $K$-functionals. For $p=\infty$, these moduli of smoothness were introduced by the third author \cite{sh-book}
(see also \cite{sh}), and certain direct and inverse theorems proved, however, no relations to weighted DT moduli were discussed.

In the sequel we will have constants $c$ that may depend only on some of the parameters involved ($p$, $k$, $r$), but are independent of the function and of $t$ or $n$, as the case may be.
The constants $c$ may be different even if they appear in the same line.

Let $\|\cdot\|_p:=\|\cdot\|_{\Lp[-1,1]}$, $ 1\le p \leq \infty$, and
$\varphi(x):=\sqrt{1-x^2}$.

For $k\in\N_0$, $h\geq 0$, an interval $J$ and $f: J \mapsto\R$, let
\[\Delta_h^k(f,x, J):=\left\{
\begin{array}{ll} \ds
\sum_{i=0}^k  {\binom ki}
(-1)^{k-i} f(x+(i-k/2)h),&\mbox{\rm if }\, x\pm kh/2  \in J \,,\\
0,&\mbox{\rm otherwise},
\end{array}\right.\]
be the $k$th symmetric difference, and let $\Delta_h^k(f,x) := \Delta_h^k(f,x,[-1,1])$.
\begin{definition} Let $1\le p \leq \infty$ and $r\in\N_0$. Then
for $r\ge1$, let
$$
\B_p^r:=\{\ f\ |\ f^{(r-1)}\in
AC_{loc}(-1,1)\quad\text{and}\quad\|f^{(r)}\varphi^r\|_p<+\infty\},
$$
and set $\B_p^0:=\L_p[-1,1]$.
\end{definition}
(Recall that $AC_{loc}(-1,1)$ denotes the set of  functions which are locally absolutely continuous in $(-1,1)$.)

\begin{definition}
For $f\in \B^r_p$, define
\be \label{wkrdefinition}
\wkr(f^{(r)},t)_p := \sup_{0 \leq h \leq t} \norm{\wt^r_{kh}(\cdot)
\Delta_{h\varphi(\cdot)}^k (f^{(r
)},\cdot)}{p} ,
\ee
where
\[
\wt_\delta(x):= \bigl((1-x-\delta\varphi(x)/2)
(1+x-\delta\varphi(x)/2)\bigr)^{1/2}.
\]
\end{definition}

For $\delta >0$, denote
\begin{align*}
\Dom_\delta:=&\left\{x\st1-\delta\varphi(x)/2\geq|x|
\right\}\setminus\{\pm1\}\\=&\left\{x\st|x|\leq
\frac{4-\delta^2}{4+\delta^2}\right\}=[-1+\ddelta,1-\ddelta],
\end{align*}
where
\[
\ddelta:=2\delta^2/(4+\delta^2).
\]
Observe that $\Dom_\delta=\emptyset$ if $\delta>2$, and note that
${\Delta}_{h\varphi(x)}^k(f,x)$ is defined to be identically 0 if
$x\not\in\Dom_{kh}$ and that $ \wt_\delta$ is well defined on
$\Dom_\delta$ (in fact, if $\delta \leq 2$, then $\Ddom(\wt_\delta) = \Dom_\delta \cup \{\pm 1\}$).

Hence,
\[
\wkr(f^{(r)},t)_p=\sup_{0< h \leq t} \norm{ \wt^r_{kh}(\cdot)
\Delta_{h\varphi(\cdot)}^k (f^{(r)},\cdot)}{\Lp(\Dom_{kh})}
\]
and
\be\label{larget}
\wkr(f^{(r)},t)_p = \wkr(f^{(r)},2/k)_p, \quad \mbox{\rm for } t\geq 2/k .
\ee

\begin{remark} \label{remarkdt}
 When $r=0$, $\omega^\varphi_{k,0}(f,t)_p$ reduces to
the well known $k$th DT modulus of smoothness $\omega^\varphi_{k}(f,t)_p$ $($see, \eg
\cite{dt}$)$.
\end{remark}

\begin{remark} When $p=\infty$, $\omega^\varphi_{k,r}(f,t)_\infty$ reduces to
the   modulus of smoothness introduced by the third author $($see, \eg
\cites{sh-book,sh}$)$.
\end{remark}

Our moduli of smoothness are certain type of weighted DT moduli
(see Section \ref{sec3} for details). However, we give a more
transparent and simpler definition of the moduli, which, in
particular, makes their monotonicity in $t$, self-evident. Moreover,
we are not interested in weighted approximation, rather we are
interested in applying these moduli to estimates on the non-weighted
approximation of $f\in \B^r_p$ (see Section \ref{sec5} for
details).
%We conclude the paper with some comments and new results
%on more general weighted DT moduli.

If $1\leq p <\infty$, our moduli are equivalent to the following averaged moduli of smoothness.

\begin{definition} \label{avemodulus}
Let   $k\in\N$, $r\in\N_0$  and
$f\in\B^r_p$, $1\le p<\infty$. Then, the averaged modulus of smoothness is defined as
\[
\omega_{k,r}^{*\varphi}(f^{(r)},t)_p
:=\left(\frac1t\int_0^t\int_{\Dom_{k\tau}}
|\wt^r_{k\tau}(x)\Delta^k_{\tau\varphi(x)}(f^{(r)},x)|^p\,dx\,d\tau
\right)^{1/p}.
\]
\end{definition}
 For convenience, for $p=\infty$, we also define
 \[
\omega_{k,r}^{*\varphi}(f^{(r)},t)_\infty := \wkr (f^{(r)},t)_\infty .
\]
While the modulus $\wkr (f^{(r)},t)_p$ is obviously a non-decreasing function of $t$, the averaged modulus $\wkrav (f^{(r)},t)_p$ does not have to be non-decreasing. At the same time, it immediately follows from Definition~\ref{avemodulus} that
\be \label{ineqavermon}
\wkrav (f^{(r)},t_1)_p \leq \left( t_2/t_1\right)^{1/p} \wkrav (f^{(r)},t_2)_p , \quad \mbox{\rm for } 0<t_1\leq t_2 .
\ee

It turns out that the above defined moduli are equivalent to the following
$K$-functional.
\begin{definition}[$K$-functional]\label{k-func} For $k\in\N$, $r\in\N_0$, $1\leq p\leq \infty$ and $f\in\B_p^r$, denote
$$
K^\varphi_{k,r}(f^{(r)},t^k)_p:=\inf_{g\in\B_p^{k+r}}(\|(f^{(r)}-g^{(r)})\varphi^r\|_p
+t^k\|g^{(k+r)}\varphi^{k+r}\|_p).
$$
\end{definition}

The following result is valid.
\begin{theorem}\label{thm1.4} If $k\in\N$, $r\in\N_0$, $1\leq p\leq \infty$ and $f\in\B_p^r$,  then, for all $0<t\leq 2/k$,
\be   \label{maintheorem}
c  K^\varphi_{k,r}(f^{(r)},t^k)_p \leq \omega_{k,r}^{*\varphi}(f^{(r)},t)_p \leq \wkr(f^{(r)},t)_p\leq c  K^\varphi_{k,r}(f^{(r)},t^k)_p   ,
\ee
where constants $c$ may depend only on $k$, $r$ and $p$.
\end{theorem}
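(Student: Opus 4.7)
The middle inequality $\omega_{k,r}^{*\varphi}(f^{(r)},t)_p\le\wkr(f^{(r)},t)_p$ is immediate from the definitions. For $p<\infty$, each inner $L_p$-integrand defining $\omega_{k,r}^{*\varphi}$ at level $\tau\in[0,t]$ is bounded by $\wkr(f^{(r)},\tau)_p^p\le\wkr(f^{(r)},t)_p^p$, so its $\tau$-average is bounded by the same; the case $p=\infty$ holds by convention. So the real work lies in the two outer bounds.

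For the right-hand bound $\wkr(f^{(r)},t)_p\le cK^\varphi_{k,r}(f^{(r)},t^k)_p$, I would fix any $g\in\B_p^{k+r}$ and split
$$\Delta_{h\varphi(x)}^k(f^{(r)},x)=\Delta_{h\varphi(x)}^k\bigl((f-g)^{(r)},x\bigr)+\Delta_{h\varphi(x)}^k(g^{(r)},x).$$
The key pointwise fact, which follows directly from the definitions of $\wt$ and $\Dom$, is that for $x\in\Dom_{kh}$ and $0\le i\le k$ one has $\wt_{kh}(x)\le\varphi(x+(i-k/2)h\varphi(x))$. Expanding the first summand by the binomial and applying Minkowski via the change of variables $y=x+(i-k/2)h\varphi(x)$ (whose Jacobian is bounded above and below uniformly on $\Dom_{kh}$) yields $c\|(f^{(r)}-g^{(r)})\varphi^r\|_p$. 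For the second summand, the Peano representation
$$\Delta_{h\varphi(x)}^k(g^{(r)},x)=(h\varphi(x))^k\int_{[-1/2,1/2]^k}g^{(k+r)}\Bigl(x+h\varphi(x)\sum_{j=1}^k u_j\Bigr)du_1\cdots du_k$$
lets me pair $\wt_{kh}^r(x)\varphi(x)^k$ with the factor $\varphi^{k+r}$ evaluated at the shifted argument (again via the pointwise comparison above), and Minkowski's integral inequality delivers $ch^k\|g^{(k+r)}\varphi^{k+r}\|_p$. Taking $h\le t$ and then the infimum over $g$ produces the desired bound.

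The delicate step is the leftmost bound $K^\varphi_{k,r}(f^{(r)},t^k)_p\le c\,\omega_{k,r}^{*\varphi}(f^{(r)},t)_p$, for which I would construct an explicit near-best approximant $g_t$ by a Steklov-type average. Starting from an identity of the form
$$f^{(r)}(x)=\sum_{i\ne k/2}\lambda_i f^{(r)}\bigl(x+(i-k/2)\tau\varphi(x)\bigr)+\lambda_0\Delta_{\tau\varphi(x)}^k(f^{(r)},x)$$
(valid with suitable constants $\lambda_i$ for even $k$; for odd $k$ one iterates two such formulas), I would average over $\tau\in[0,t]$ to obtain $f^{(r)}=G_t+R_t$, where $R_t$ is a $\tau$-average of $\Delta_{\tau\varphi(x)}^k(f^{(r)},x)$ — so $\|R_t\varphi^r\|_p$ is already bounded by $c\,\omega_{k,r}^{*\varphi}(f^{(r)},t)_p$ — and $G_t$ is a Steklov mean of $f^{(r)}$ admitting an order-$r$ antiderivative $g_t\in\B_p^{k+r}$. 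To bound the second $K$-functional term $t^k\|g_t^{(k+r)}\varphi^{k+r}\|_p$, one differentiates the Steklov mean $k$ times in $x$: each $x$-derivative converts a $\tau$-integral into a first-order divided difference of step $\tau\varphi(x)$, and after $k$ steps the combined operator yields (modulo lower-order terms) a multiple of $(\tau\varphi(x))^{-k}\Delta_{\tau\varphi(x)}^k(f^{(r)},x)$, whose weighted $L_p$-average in $\tau$ is once more $c\,\omega_{k,r}^{*\varphi}(f^{(r)},t)_p$.

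The main obstacle is this last $k$-fold differentiation. Since the Steklov step $\tau\varphi(x)$ depends on $x$ through $\varphi$, each $x$-derivative also falls on $\varphi(x)$ and on the moving limits of integration, producing several endpoint and lower-order terms that are potentially singular as $|x|\to 1$. The purpose of the cutoff weight $\wt_{k\tau}^r$ in the definition of the modulus is to compensate precisely for these endpoint effects: $\wt_{k\tau}(x)$ behaves like $\varphi(x)$ on the interior of $\Dom_{k\tau}$ but vanishes at its boundary, exactly matching the decay required to absorb the extra terms. Verifying this absorption quantitatively — and observing that the switch from $\wkr$ to the averaged $\omega_{k,r}^{*\varphi}$ is essential here because the supremum version does not provide the integrable $\tau$-slice used above — is the core technical content of the proof.
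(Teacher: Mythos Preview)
Your treatment of the middle inequality and of the first summand in the upper estimate matches the paper. But the second summand is where your sketch breaks down. You write that the pointwise comparison $\wt_{kh}(x)\le\varphi(y)$ lets you ``pair $\wt_{kh}^r(x)\varphi(x)^k$ with the factor $\varphi^{k+r}$ evaluated at the shifted argument''. The comparison $\wt_{kh}(x)\le\varphi(y)$ handles only the $r$ powers; to absorb the remaining $\varphi(x)^k$ you need $\varphi(x)\le c\,\varphi(y)$ for $y\in[x-kh\varphi(x)/2,\,x+kh\varphi(x)/2]$, and this holds only on the strictly smaller set $\Dom_{2kh}$ (Proposition~\ref{prop}\ineq{compar} and \ineq{phi}), not on all of $\Dom_{kh}$. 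On the boundary layer $\E_{kh}=\Dom_{kh}\setminus\Dom_{2kh}$ the ratio $\varphi(x)/\varphi(x+kh\varphi(x)/2)$ blows up, and Minkowski alone does not give $ch^k\|g^{(k+r)}\varphi^{k+r}\|_p$. The paper devotes the bulk of Section~\ref{upperest} (Part~II) to this boundary region: it applies H\"older to isolate $\|\varphi^{-k-r}\|_{\L_q(\mathcal A(x,u))}$, reduces via $\wt_{kh}^r\le\varphi^r$ to estimating $\F_q(x,k)$, and then runs a case analysis in $k$, $p$, $q$ and in the size of $h$ to verify \ineq{kge1ab}--\ineq{keqinfab}. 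Your one-line Minkowski argument is exactly the Part~I estimate on $\Dom_{2kh}$ and misses all of this.

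For the leftmost inequality your approach is genuinely different from the paper's. The paper does \emph{not} build a Steklov approximant; instead it goes through the weighted DT modulus $\omega_\varphi^k(f^{(r)},t)_{\varphi^r,p}$ of \ineq{dtweighted}, invokes the Ditzian--Totik equivalence \ineq{equivalence}/\ineq{dtupper} with the $K$-functional as a black box, and then shows (Lemmas~\ref{auxlemmanew} and~\ref{auxlemmainfty}) that the DT main-part and forward/backward endpoint terms are each dominated by $\wkrav(f^{(r)},ct)_p$ via changes of variable. Your direct construction is in principle more self-contained, but the obstacle you flag --- controlling the endpoint terms produced when $k$ $x$-derivatives hit the $x$-dependent step $\tau\varphi(x)$ --- is real and nontrivial; the weight $\wt_{k\tau}^r$ vanishes at $\partial\Dom_{k\tau}$ but only to order $r/2$ in $1\pm x$, which need not absorb all the $\varphi'$-factors generated. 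The paper sidesteps this entirely by outsourcing the hard analysis to \cite{dt}.
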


\begin{remark}
Note that with an additional restriction that $t\leq t_0$, in the case $r=0$,  Theorem~{\rm\ref{thm1.4}}  becomes \cite[Theorem 2.1.1]{dt} (with $\varphi(x)= \sqrt{1-x^2}$), and that
$t_0$ can be taken to be $(2k)^{-1}$ as was shown in \cite[Theorem 6.6.2]{dl}.
\end{remark}

\begin{remark} \label{remequivdt}
It follows from $\thm{thm1.4}$ that
\[
\wkr(f^{(r)},t)_p \sim \omega_{\varphi}^k(f^{(r)},t)_{\varphi^r,p} ,
\]
where $\omega_{\varphi}^k(f^{(r)},t)_{\varphi^r,p}$ is a weighted DT modulus defined in $\ineq{dtweighted}$ (see Section~$\ref{sec3}$ for more details).
\end{remark}

Since it is obvious that
\[
\omega_{k,r}^{*\varphi}(f^{(r)},t)_p\leq\omega_{k,r}^{\varphi}(f^{(r)},t)_p,
\]
only the first and last inequalities in \ineq{maintheorem} need to be proved. Their proofs are given, respectively, in
Sections~\ref{lowerest} and \ref{upperest}.

We conclude this section with an immediate consequence of Theorem \ref{thm1.4}.
\begin{corollary} \label{maincorol}
 Let $k\in\N$, $r\in\N_0$, $1\leq p\leq \infty$, $f\in\B_p^r$ and
  $\lambda \geq 1$. Then, for all $t>0$,
$$
\wkr(f^{(r)},\lambda t)_p\le c \lambda^k\wkr(f^{(r)},t)_p .
$$
\end{corollary}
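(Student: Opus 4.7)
The proof is a direct consequence of the equivalence $\wkr(f^{(r)},t)_p \sim K^\varphi_{k,r}(f^{(r)},t^k)_p$ established in Theorem~\ref{thm1.4}, combined with the obvious sub-homogeneity of $K$-functionals in the parameter. The only mildly technical point is that the $K$-functional equivalence is stated only for $0 < t \le 2/k$, so one has to handle the other cases by falling back on \ineq{larget}.

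I would organize the argument around three cases according to the positions of $t$ and $\lambda t$ relative to $2/k$. First, if $t \ge 2/k$, then also $\lambda t \ge 2/k$ (as $\lambda \ge 1$), and by \ineq{larget} both sides equal $\wkr(f^{(r)},2/k)_p$, so the inequality is immediate with constant $1$ (even without the $\lambda^k$). Second, if $\lambda t \le 2/k$, Theorem~\ref{thm1.4} applies at both $t$ and $\lambda t$, giving
\[
\wkr(f^{(r)},\lambda t)_p \le c\, K^\varphi_{k,r}(f^{(r)},(\lambda t)^k)_p = c\, K^\varphi_{k,r}(f^{(r)},\lambda^k t^k)_p.
\]
For any $g\in\B_p^{k+r}$ one has
\[
\|(f^{(r)}-g^{(r)})\varphi^r\|_p + \lambda^k t^k\|g^{(k+r)}\varphi^{k+r}\|_p \le \lambda^k\bigl(\|(f^{(r)}-g^{(r)})\varphi^r\|_p + t^k\|g^{(k+r)}\varphi^{k+r}\|_p\bigr),
\]
since $\lambda^k \ge 1$, so taking the infimum yields $K^\varphi_{k,r}(f^{(r)},\lambda^k t^k)_p \le \lambda^k K^\varphi_{k,r}(f^{(r)},t^k)_p$. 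Applying Theorem~\ref{thm1.4} in the reverse direction then gives the claim.

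Third, the remaining case is $t \le 2/k < \lambda t$. Here I would introduce $\tilde\lambda := 2/(kt)$, so that $1 \le \tilde\lambda \le \lambda$ and $\tilde\lambda t = 2/k$. By \ineq{larget},
\[
\wkr(f^{(r)},\lambda t)_p = \wkr(f^{(r)},2/k)_p = \wkr(f^{(r)},\tilde\lambda t)_p,
\]
and since $\tilde\lambda t \le 2/k$, the previous case applies to give $\wkr(f^{(r)},\tilde\lambda t)_p \le c\tilde\lambda^k\wkr(f^{(r)},t)_p \le c\lambda^k\wkr(f^{(r)},t)_p$.

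There is no substantive obstacle here; the main content is Theorem~\ref{thm1.4} itself, and once one has the $K$-functional characterization the result is essentially automatic. The only thing to be careful about is not to invoke Theorem~\ref{thm1.4} outside the range $0 < s \le 2/k$, which is precisely what the splitting into the three cases above accomplishes.
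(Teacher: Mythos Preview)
Your proof is correct and follows essentially the same approach as the paper: both rely on \thm{thm1.4}, the trivial sub-homogeneity of the $K$-functional, and \ineq{larget} to cover the range $t>2/k$. The paper streamlines your three cases into a single chain by setting $\tilt:=\min\{\lambda t,2/k\}$ and using the monotonicity of $\wkr$ (so that $\wkr(f^{(r)},\lambda t)_p=\wkr(f^{(r)},\tilt)_p$ and, at the end, $\wkr(f^{(r)},\tilt/\lambda)_p\le\wkr(f^{(r)},t)_p$), but the substance is identical.
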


\begin{proof}
Using \thm{thm1.4}, identity \ineq{larget} and the monotonicity, in $t$, of both the $K$-functional $K^\varphi_{k,r}(f^{(r)},t^k)_p$ and the modulus $\wkr(f^{(r)},  t)_p$, and denoting
$\tilt := \min\{\lambda t, 2/k \}$,  we have
 \begin{eqnarray*}
\wkr(f^{(r)},\lambda t)_p  & = & \wkr(f^{(r)},\tilt )_p \leq c K^\varphi_{k,r}(f^{(r)},\tilt^k)_p \leq c \lambda^k K^\varphi_{k,r}(f^{(r)},(\tilt/\lambda)^k)_p \\
&\leq & c \lambda^k \wkr(f^{(r)}, \tilt/\lambda)_p \leq c \lambda^k \wkr(f^{(r)}, t)_p ,
\end{eqnarray*}
for any $t>0$.
\end{proof}

 \begin{remark}
With an additional restriction that $t\leq t_0$, in the case $r=0$,   $\cor{maincorol}$ becomes \cite[Theorem 4.1.2]{dt} .
\end{remark}

\sect{Auxiliary results}\label{sec2}

In the following proposition, we list several useful properties of the weights $\wt_\delta(x)$ and the sets $\Dom_\delta$, $\delta >0$, which will be  used below.
(Note that the statements in this proposition are vacuously true for $\delta$'s such that $\Dom_\delta$ and/or $\Dom_{2\delta}$ are empty.)

\begin{proposition}[Properties of $\wt_\delta(x)$ and $\Dom_\delta$] \label{prop}\mbox{}

\begin{enumerate}[\bf (i)]
\item \label{useful}
$\wt_\delta(x)\le \varphi(u)$, for   $x\in\Dom_\delta$  and
$u\in\left[-|x|-\delta\varphi(x)/2,|x|+\delta\varphi(x)/2\right]$.

 \item \label{compare}
 $\wt_\delta(x)\leq \varphi(x)$, for
$x\in\Dom_\delta$.

\item \label{compar}
%If $\delta \leq 1$, then
$\varphi(x)\leq 2\wt_\delta(x)$, for $x\in\Dom_{2\delta}$.

\item \label{auxineq}
$ \delta|\varphi'(x)|\le 1$, for $x\in\Dom_{\delta}$.

\item \label{3.1}
If $y(x):=x+\delta_1\varphi(x)/2$ and $|\delta_1|\leq \delta$, then
$1/2 \leq y'(x)\leq 3/2$, for all  $x\in\Dom_{\delta}$.

\item \label{lastprop}
If $\delta_1> \delta_2$, then $\Dom_{\delta_1}\subset\Dom_{\delta_2}$.
\end{enumerate}
\end{proposition}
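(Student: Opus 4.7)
The plan is to start from the algebraic identity
\[
\wt_\delta(x)^2 = (1-\delta\varphi(x)/2)^2 - x^2,
\]
which is immediate from viewing the defining product as a difference of squares, together with the two equivalent forms of the defining inequality for $\Dom_\delta$: $\delta\varphi(x)/2 \le 1-|x|$ and $|x| \le (4-\delta^2)/(4+\delta^2)$. Every claim will reduce to elementary algebra using these, with the side remark that $0 \le 1-\delta\varphi(x)/2 \le 1$ on $\Dom_\delta$, so square roots are unproblematic.

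For (i), since $|u| \le |x| + \delta\varphi(x)/2$ throughout the stated interval, it will suffice to show $\wt_\delta(x)^2 \le 1 - (|x|+\delta\varphi(x)/2)^2$; substituting the identity and simplifying collapses this down to precisely $\delta\varphi(x)/2 \le 1-|x|$, i.e., membership in $\Dom_\delta$. Item (ii) is the $u=x$ specialisation of (i), and (iii) is analogous in spirit: on $\Dom_{2\delta}$ one has $1-\delta\varphi(x)/2 \ge (1+|x|)/2$, hence
\[
4\wt_\delta(x)^2 \ge (1+|x|)^2 - 4x^2 = 1+2|x|-3x^2 \ge 1-x^2 = \varphi(x)^2,
\]
the last step being $2|x| \ge 2x^2$ for $|x|\le 1$.

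For (iv), I will expand $\delta|\varphi'(x)| = \delta|x|/\varphi(x)$ and use $\delta \le 2(1-|x|)/\varphi(x)$ (from $\Dom_\delta$) to obtain
\[
\delta|\varphi'(x)| \le \frac{2|x|(1-|x|)}{\varphi(x)^2} = \frac{2|x|}{1+|x|} \le 1.
\]
Item (v) will then be a one-liner: $|y'(x)-1| = |\delta_1\varphi'(x)|/2 \le \delta|\varphi'(x)|/2 \le 1/2$, using $|\delta_1|\le\delta$. Finally, (vi) is immediate from the monotonicity of $\mu(\delta)=2\delta^2/(4+\delta^2)$ in $\delta$ (its derivative equals $16\delta/(4+\delta^2)^2 \ge 0$), together with the representation $\Dom_\delta = [-1+\mu(\delta), 1-\mu(\delta)]$.

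There should be no real obstacle: everything is elementary once the difference-of-squares rewriting of $\wt_\delta(x)^2$ is noticed. The only things to watch are keeping straight which defining inequality of $\Dom_\delta$ or $\Dom_{2\delta}$ is being invoked at each step, and remembering that the endpoints $\pm1$ are excluded so that $\varphi(x)$ in denominators is always strictly positive.
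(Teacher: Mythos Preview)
Your proof is correct and essentially follows the paper's own argument: items (i), (ii), (iv), (v), (vi) are handled identically (the difference-of-squares identity $\wt_\delta(x)^2=(1-\delta\varphi(x)/2)^2-x^2$ is just a convenient repackaging of the same computation the paper does), and for (iii) your single-step bound via the identity is a minor algebraic variant of the paper's two-factor estimate $1\pm|x|\le 2(1\pm|x|-\delta\varphi(x)/2)$.
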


\begin{proof}
For $x\in\Dom_\delta$ and $u\in\left[-|x|-\delta\varphi(x)/2,|x|+\delta\varphi(x)/2\right]$, we have
$$
\varphi^2(u)-\wt_\delta^2(x)\ge\varphi^2(|x|+\delta\varphi(x)/2)-\wt_\delta^2(x)
=(1-|x|-\delta\varphi(x)/2)\delta\varphi(x)\ge0,
$$
which implies \ineq{useful}.

Choosing $u$ to be $x$ in \ineq{useful} we get \ineq{compare}.

Now, $\delta \leq 1$    implies $1+|x|\le
2(1+|x|-\delta\varphi(x)/2)$, and $x\in\Dom_{2\delta}$, that is,
$\delta\varphi(x)\le 1-|x|$, yields $1-|x|\le
2(1-|x|-\delta\varphi(x)/2)$. Hence,
$$
\varphi^2(x)=(1-|x|)(1+|x|)\le
4(1-|x|-\delta\varphi(x)/2)(1+|x|-\delta\varphi(x)/2)=4
\wt_\delta^2(x),\quad x\in\Dom_{2\delta},
$$
and so \ineq{compar} is verified. If $\delta > 1$ then \ineq{compar}  is vacuously true since $\Dom_{2\delta}=\emptyset$.

If $x\in\Dom_{\delta}$, then  $\delta\varphi(x)/2\le
1-|x|=(1+|x|)^{-1}\varphi^2(x)$, that is $\varphi(x)\ge
\delta (1+|x|)/2$. Hence
$$
\delta|\varphi'(x)|=\delta\frac{|x|}{\varphi(x)}\le\frac{2|x|}{1+|x|}\le1,
$$
which is \ineq{auxineq}.

Property \ineq{3.1} immediately follows from \ineq{auxineq}, and \ineq{lastprop} is obvious.
\end{proof}

The first important property of the new moduli is stated in the
following lemma.

\begin{lemma}\label{lem1}
If $r\in\N_0$, $1\leq p < \infty$ and
 $f\in\B^r_p$, then
$$
\lim_{t\to0^+} \omega_{k,r}^\varphi(f^{(r)},t)_p = 0 .
$$
\end{lemma}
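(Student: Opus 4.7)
The plan is a standard three-step density argument. First, I would establish the a priori bound
\begin{equation*}
\omega_{k,r}^\varphi(F, t)_p \leq c \|\varphi^r F\|_p, \qquad c = c(k,r,p),
\end{equation*}
valid for every $t > 0$ and every function $F$ on $[-1,1]$ with $\varphi^r F \in L_p$. Second, using density of $C_c^\infty(-1,1)$ in $L_p[-1,1]$ (this is where the hypothesis $p < \infty$ enters), I would fix $\varepsilon > 0$, choose $\phi \in C_c^\infty(-1,1)$ with $\|\varphi^r f^{(r)} - \phi\|_p < \varepsilon/c$, and set $g := \phi/\varphi^r$, which is smooth and compactly supported in $(-1,1)$, hence belongs to $C[-1,1]$. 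Writing $f^{(r)} = (f^{(r)} - g) + g$ and applying the triangle inequality inside the $L_p$ norm and the $\sup$ over $h$,
\begin{equation*}
\omega_{k,r}^\varphi(f^{(r)}, t)_p \leq \omega_{k,r}^\varphi(f^{(r)} - g, t)_p + \omega_{k,r}^\varphi(g, t)_p \leq \varepsilon + \omega_{k,r}^\varphi(g, t)_p.
\end{equation*}
For the last term, uniform continuity of $g$ on $[-1,1]$ forces $\Delta_{h\varphi(x)}^k(g,x) \to 0$ uniformly in $x$ as $h \to 0^+$; together with $\wt_{kh}(x) \leq 1$ and $\meas(\Dom_{kh}) \leq 2$ this gives $\omega_{k,r}^\varphi(g,t)_p \to 0$. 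Letting $t \to 0^+$ and then $\varepsilon \to 0$ concludes the argument.

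The only nontrivial ingredient is the a priori bound, which I would prove as follows. Expand
\begin{equation*}
\Delta_{h\varphi(x)}^k(F, x) = \sum_{i=0}^k \binom{k}{i}(-1)^{k-i} F(y_i(x)), \qquad y_i(x) := x + \left(i - \tfrac{k}{2}\right) h\varphi(x),
\end{equation*}
and apply Minkowski's inequality to reduce to bounding $\|\wt_{kh}^r(\cdot) F(y_i(\cdot))\|_{L_p(\Dom_{kh})}$ for each $i$ separately. Since $|i - k/2| \leq k/2$, the point $y_i(x)$ lies in the interval $[-|x| - kh\varphi(x)/2, |x| + kh\varphi(x)/2]$, so Proposition~\ref{prop}\ineq{useful} yields $\wt_{kh}(x) \leq \varphi(y_i(x))$ on $\Dom_{kh}$. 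Changing variables $y = y_i(x)$ and invoking Proposition~\ref{prop}\ineq{3.1} to get $y_i'(x) \geq 1/2$ (so $dx \leq 2\,dy$ and $y \in [-1,1]$), one obtains
\begin{equation*}
\int_{\Dom_{kh}} \wt_{kh}^{rp}(x)|F(y_i(x))|^p\, dx \leq \int_{\Dom_{kh}} \varphi^{rp}(y_i(x))|F(y_i(x))|^p\, dx \leq 2\int_{-1}^{1} \varphi^{rp}(y)|F(y)|^p\, dy,
\end{equation*}
which gives the a priori bound with $c = 2^k \cdot 2^{1/p}$.

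The delicate point is precisely this change of variables: the weight domination $\wt_{kh}(x) \leq \varphi(y_i(x))$ and the Jacobian control $y_i'(x) \geq 1/2$ must hold on the same set, and Proposition~\ref{prop} is arranged so that both are valid on $\Dom_{kh}$, which is exactly the effective domain of $\Delta_{h\varphi(\cdot)}^k$ appearing in the modulus. Once this a priori bound is in hand, the density/uniform-continuity step is entirely routine.
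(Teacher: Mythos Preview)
Your proof is correct and follows essentially the same approach as the paper: both split $f^{(r)}$ into a part that is small in the weighted $L_p$ norm (handled via the same change-of-variables a priori bound using Proposition~\ref{prop}\ineq{useful} and \ineq{3.1}) and a part whose modulus tends to zero. The only cosmetic difference is that the paper truncates $f^{(r)}$ to a compact subinterval and invokes the known $r=0$ result $\omega_k^\varphi(g^{(r)},t)_p\to 0$ for the good part, whereas you approximate $\varphi^r f^{(r)}$ by a smooth compactly supported $\phi$ and argue directly from uniform continuity of $g=\phi/\varphi^r$.
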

\begin{proof} Let $\epsilon>0$. Then there is $\delta>0$  such that
$$
\int_{[-1,1]\setminus
\Dom_{\delta}}|\varphi^r(x)f^{(r)}(x)|^pdx<\left(\frac\epsilon{2^{k+2}}\right)^p.
$$
Set
$$
g^{(r)}(x):=\begin{cases} f^{(r)}(x),&\quad\text{if}\quad
x\in\Dom_{\delta},\\
0,&\quad\text{otherwise.}\end{cases}
$$
Since $g^{(r)}\in L_p[-1,1]$,
 there exists $t_0>0$ such that
$$
\omega_k^\varphi(g^{(r)},t)_p< \epsilon/2,\quad 0<t\le t_0.
$$
For each $h>0$, we have
\begin{eqnarray*}
\lefteqn{ \norm{ \wt_{kh}^r(\cdot) \Delta_{h\varphi(\cdot)}^k
(f^{(r)},\cdot)}{p} }\\
&\le& \norm{ \wt^r_{kh}(\cdot)
\Delta_{h\varphi(\cdot)}^k (g^{(r
)},\cdot)}{p}+\norm{\wt_{kh}^r(\cdot)
\Delta_{h\varphi(\cdot)}^k (f^{(r )}-g^{(r )},\cdot)}{p}\\
&\le & \norm{ \Delta_{h\varphi(\cdot)}^k (g^{(r )},\cdot)}{p}
+\norm{ \wt_{kh}^r(\cdot) \Delta_{h\varphi(\cdot)}^k (f^{(r )}-g^{(r
)},\cdot)}{p}\\&=:&I_1+I_2.
\end{eqnarray*}
Now if $h\le t_0$, then
$I_1< \epsilon/2$,
and
\begin{eqnarray*}
I_2&\le&
\sum_{i=0}^k{\binom ki}\left(\int_{\Dom_{kh}}
\left(\wt_{kh}^r(x)|f^{(r)}(x+(i-k/2)h\varphi(x)) \right.\right.\\
&&\left. \left.\qquad\qquad
-g^{(r)} (x+(i-k/2)h\varphi(x))|\right)^pdx\right)^{1/p}\\
&\le&
\sum_{i=0}^k{\binom ki}\biggl(\int_{\Dom_{kh}}
\left(\varphi^r(x+(i-k/2)h\varphi(x))|f^{(r)}(x+(i-k/2)h\varphi(x))\right.
\\
&&\left.\qquad\qquad
-g^{(r)}(x+(i-k/2)h\varphi(x))|\right)^pdx\biggr)^{1/p}\\&\le&2\sum_{i=0}^k{\binom ki}\left(\int_{-1}^1
\left(\varphi^r(u)|f^{(r)}(u)-g^{(r)}(u)|\right)^pdu\right)^{1/p}
\\&\le&2\sum_{i=0}^k{\binom ki}\left(\int_{[-1,1]\setminus\Dom_\delta}
|\varphi^r(u)f^{(r)}(u)|^p\right)^{1/p}\le\epsilon/2,
\end{eqnarray*}
where for the second inequality we used, for $x\in\Dom_{kh}$, the
inequality $\wt_{kh}(x)\le\varphi(u(x))$, where
$u=u(x):=x+(i-k/2)h\varphi(x))$, and the third inequality follows
because \prop{prop}{3.1} implies that $u'(x) \geq 1/2$ when $x\in \Dom_{kh}$. This completes the
proof.
\end{proof}

\begin{remark}
Note that $f\in\B^r_\infty$ only implies that $\omega_{k,r}^\varphi(f^{(r)},t)_\infty < \infty$ for $t>0$ and does NOT imply that $\lim_{t\to0^+} \omega_{k,r}^\varphi(f^{(r)},t)_\infty = 0$ even if we assume that $f^{(r)}\in\C(-1,1)$. For example, if $f$ is such that $f^{(r)}(x) := \varphi^{-r}(x)$, $r\in\N$, then
$f\in\B^r_\infty \cap\C^r(-1,1)$ and $\omega_{k,r}^\varphi(f^{(r)},t)_\infty \geq \const > 0$.

In was proved in \cite{sh} (see also \cite{sh-book})  that, for $r\in\N$ and $f\in\C^r(-1,1)$,
\[
\lim_{t\to0^+} \omega_{k,r}^\varphi(f^{(r)},t)_\infty = 0 \quad \mbox{if and only if} \quad  \lim_{x \to\pm 1}  \varphi^r(x) f^{(r)}(x) = 0 .
\]
In the case $r=0$, it is   easy to see $($see also \cite[p. 37]{dt}$)$ that $\lim_{t\to0^+} \omega_{k,0}^\varphi(f ,t)_\infty = 0$ if and only if $f\in\C[-1,1]$.
\end{remark}

\begin{lemma} \label{lemsec2} Let $1\leq p\leq \infty$ and $r\in\N_0$. If $g\in\B^{r+1}_p$, then
\[
\norm{ \varphi^\gamma g^{(r)}}{p}<\infty ,
\]
 for any $\gamma \geq 0$ such that $\gamma > r-1$.
  \end{lemma}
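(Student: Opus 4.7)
Plan. By symmetry it suffices to bound the portion of $\|\varphi^\gamma g^{(r)}\|_p$ over $[0,1]$, and I split further at $x=1/2$. On $[0,1/2]$, where $\varphi$ is bounded above and below, the hypothesis $\varphi^{r+1}g^{(r+1)}\in L_p$ reduces to $g^{(r+1)}\in L_p[0,1/2]\subset L_1[0,1/2]$, so $g^{(r)}(x)=g^{(r)}(0)+\int_{0}^{x}g^{(r+1)}$ is bounded on $[0,1/2]$ and its contribution to $\|\varphi^\gamma g^{(r)}\|_p$ is trivially finite, using only $\gamma\ge 0$. The heart of the argument is the integral over $[1/2,1]$.

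On $[1/2,1]$ I would write $g^{(r)}(x)=g^{(r)}(1/2)+\int_{1/2}^{x}g^{(r+1)}(t)\,dt$. The constant term contributes $|g^{(r)}(1/2)|^{p}\int_{1/2}^{1}\varphi^{\gamma p}(x)\,dx$, which is finite for $\gamma\ge 0$. For the remaining integral in the case $1\le p<\infty$, I substitute $u=1-x$ and $s=1-t$, so that $\varphi^{\gamma p}(x)\le C u^{\gamma p/2}$ on $[1/2,1]$, and invoke the weighted Hardy inequality
\[
\int_{0}^{T}u^{\alpha}\left(\int_{u}^{T}f(s)\,ds\right)^{p}du \le C_{\alpha}\int_{0}^{T}s^{\alpha+p}f(s)^{p}\,ds, \qquad \alpha>-1,\ p\ge 1,
\]
with $\alpha=\gamma p/2\ge 0$ and $f(s)=|g^{(r+1)}(1-s)|$. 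Undoing the substitution gives a bound by a constant multiple of $\int_{1/2}^{1}(1-t)^{\gamma p/2+p}|g^{(r+1)}(t)|^{p}\,dt$, and the hypothesis $\gamma>r-1$ yields $\gamma p/2+p\ge (r+1)p/2$, so on $[1/2,1]$ this is dominated by $\int_{1/2}^{1}\varphi^{(r+1)p}(t)|g^{(r+1)}(t)|^{p}\,dt\le\|\varphi^{r+1}g^{(r+1)}\|_{p}^{p}<\infty$.

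For $p=\infty$ I would use the pointwise bound
\[
|g^{(r)}(x)|\le|g^{(r)}(1/2)|+\|\varphi^{r+1}g^{(r+1)}\|_{\infty}\int_{1/2}^{x}\varphi^{-(r+1)}(t)\,dt
\]
and check by elementary casework that $\varphi^\gamma(x)$ dominates the right-hand side on $[1/2,1)$ under $\gamma>r-1$: the integral is $O(1)$ if $r=0$; it is $O(\log\frac{1}{1-x})$ if $r=1$, which is killed by $\varphi^\gamma$ precisely when $\gamma>0$; and it is $O((1-x)^{-(r-1)/2})$ if $r\ge 2$, killed by $\varphi^\gamma$ when $\gamma\ge r-1$. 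The single borderline case $p=\infty,\,r=1$ is exactly why the strict inequality $\gamma>r-1$ is imposed.

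The main obstacle, routine once recognized, is choosing the right form of Hardy's inequality and verifying that the exponent arithmetic $\gamma p/2+p\ge(r+1)p/2$ matches exactly the hypothesis $\gamma\ge r-1$; the logarithmic $p=\infty,\,r=1$ case then forces the strict inequality in the statement.
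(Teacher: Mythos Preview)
Your proof is correct, and the overall shape---reduce to the one-sided endpoint behaviour, write $g^{(r)}$ as an integral of $g^{(r+1)}$, and handle $p=\infty$ by direct pointwise estimates---matches the paper. The genuine difference is in how you treat $1\le p<\infty$: the paper applies H\"older's inequality to $\int_0^x g^{(r+1)}=\int_0^x \varphi^{-(r+1)}\cdot\varphi^{r+1}g^{(r+1)}$, then integrates $\varphi^{\gamma p}(x)\bigl(\int_0^x\varphi^{-(r+1)q}\bigr)^{p/q}$ explicitly, which forces a separate treatment of the borderline case $(r+1)q=2$ (i.e.\ $p=2$, $r=0$) where a logarithm appears. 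Your use of the weighted Hardy inequality with exponent $\alpha=\gamma p/2>-1$ bypasses this case split entirely and handles all $1\le p<\infty$ uniformly, at the cost of invoking a slightly less elementary tool. Either way the exponent condition one needs is exactly $\gamma\ge r-1$, with the strict inequality forced (as you note) by the $p=\infty$, $r=1$ logarithmic case.
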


\begin{corollary} \label{corsec2} Let $1\leq p\leq \infty$ and $r\in\N_0$. If $g\in\B^{r+1}_p$, then $g\in\B^{r}_p$.
\end{corollary}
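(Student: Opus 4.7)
The plan is to split into cases based on whether $r=0$ or $r\geq 1$, and in each case extract the two required conditions directly from the hypothesis $g\in\B^{r+1}_p$ with the aid of \lem{lemsec2}.

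First consider the case $r\geq 1$. Since $g\in\B^{r+1}_p$, by definition $g^{(r)}\in AC_{\loc}(-1,1)$ and $\|g^{(r+1)}\varphi^{r+1}\|_p<\infty$. I would then invoke \lem{lemsec2} (with the lemma's $r$ taken to be the corollary's $r$) and the choice $\gamma=r$, which satisfies $\gamma\geq 0$ and $\gamma>r-1$, to conclude $\|\varphi^r g^{(r)}\|_p<\infty$. For the $AC_{\loc}$ condition on $g^{(r-1)}$, I would note that $g^{(r)}\in AC_{\loc}(-1,1)$ is in particular continuous and locally integrable on $(-1,1)$, hence $g^{(r-1)}$ equals the indefinite integral of $g^{(r)}$ (plus a constant) on every $[a,b]\subset(-1,1)$, which forces $g^{(r-1)}\in AC_{\loc}(-1,1)$. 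Together these two facts give $g\in\B^r_p$.

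For the remaining case $r=0$, the only requirement is $g\in\L_p[-1,1]$, i.e.\ $\|g\|_p<\infty$. Here I would apply \lem{lemsec2} with its $r$ set to $0$ and $\gamma=0$ (which trivially satisfies $\gamma\geq 0$ and $\gamma>-1$) to obtain $\|g\|_p=\|\varphi^0 g\|_p<\infty$, giving $g\in\B^0_p$.

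There is no genuine obstacle here: the whole content lies in \lem{lemsec2}, which supplies the weighted $\L_p$ bound for $g^{(r)}$ with the desired power of $\varphi$, and the local absolute continuity of $g^{(r-1)}$ is an immediate structural consequence of $g^{(r)}\in AC_{\loc}(-1,1)$. Thus the corollary reduces to a short two-case verification from the definition of $\B^r_p$ together with a single application of the preceding lemma.
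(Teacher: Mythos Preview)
Your proposal is correct and matches the paper's approach: the paper does not give a separate proof of the corollary, treating it as an immediate consequence of \lem{lemsec2}, and you have spelled out precisely that deduction (apply the lemma with $\gamma=r$ for the weighted $\Lp$ bound, and use the elementary fact that $g^{(r)}\in AC_{\loc}$ forces $g^{(r-1)}\in AC_{\loc}$).
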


 \begin{proof}[Proof of $\lem{lemsec2}$]
%\Proof{\lem{lemsec2}}
Suppose that we are given $g\in\B^{r+1}_p$.
Without loss of generality we can assume that $g^{(r)}(0)=0$.

First, we consider the case $p=\infty$. Since $\varphi(u)\ge\varphi(x)$
for $|u|\le|x|$, we have
\begin{eqnarray*}
\norm{ \varphi^\gamma g^{(r)}}{\infty}  & \leq &   \norm{\varphi^{\gamma}(x)  \int_0^x    g^{(r+1)}(u) \, du}{\infty}\\
& \leq & \norm{ \varphi^{r+1}  g^{(r+1)}}{\infty}
\norm{ \varphi^{\gamma}(x)    \int_0^x  \varphi^{-r-1}(u) \, du}{\infty}   \\
& \leq & \norm{ \varphi^{r+1}  g^{(r+1)}}{\infty}
\norm{    \int_0^x  \varphi^{\gamma-r-1}(u) \, du}{\infty}   \\
& \leq & c \norm{ \varphi^{r+1}  g^{(r+1)}}{\infty}  .
\end{eqnarray*}

Similarly, if  $p=1$, then
\begin{eqnarray*}
\norm{\varphi^\gamma g^{(r)}}{1}&=&\int_{-1}^1 \varphi^{\gamma}(x)
\left|\int_0^xg^{(r+1)}(u)\,du\right|\,dx\\
&\le&\int_{-1}^1  \varphi^{\gamma-r-1}(x) \left|\int_0^x
\varphi^{r+1}(u)|g^{(r+1)}(u)|\,du\right|\,dx\\&\leq&\norm{
\varphi^{r+1}g^{(r+1)}}{1}\int_{-1}^1 \varphi^{\gamma-r-1}(x) dx   \leq c\norm{
\varphi^{r+1} g^{(r+1)}}{1}.
\end{eqnarray*}

Suppose now that $1<p <\infty$ and $q = p/(p-1)$ is such that $(r+1)q\ne 2$ (\ie either $r\ge1$, or $r=0$ and $p\ne2$).
Using H\"older inequality we have
\begin{eqnarray*}
\lefteqn{ \norm{\varphi^\gamma g^{(r)}}{p}^p }\\
&=& \int_{-1}^1\varphi^{\gamma p}(x)\left|\int_0^xg^{(r+1)}(u)\,du\right|^p\,dx \\
& \leq& \int_{-1}^1\varphi^{\gamma p}(x)\left|\left(\int_0^x
\varphi^{-(r+1)q}(u)\, du\right)^{1/q}
\left(\int_0^x|\varphi^{r+1}(u)g^{(r+1)}(u)|^p\,du\right)^{1/p}\right|^p\,dx\\
& \leq&\norm{\varphi^{r+1} g^{(r+1)}}{p}^p
\int_{-1}^1\varphi^{\gamma p}(x)\left| \int_0^x\varphi^{-(r+1)q}(u)\, du\right|^{p/q}\,dx\\
& =&2 \norm{ \varphi^{r+1} g^{(r+1)}}{p}^p
\int_{0}^1\varphi^{\gamma p}(x)\left(\int_0^x\varphi^{-(r+1)q}(u)\, du\right)^{p/q}\,dx\\
& \leq&2^{1+\gamma p/2} \norm{\varphi^{r+1} g^{(r+1)}}{p}^p
\int_{0}^1(1-x)^{\gamma p/2}\left(\int_0^x(1-u)^{-(r+1)q/2}\,du\right)^{p/q}\,dx\\
& \leq& c \norm{\varphi^{r+1} g^{(r+1)}}{p}^p
\int_{0}^1(1-x)^{\gamma p/2} \max\{1, (1-x)^{-1-p(r-1)/2} \} dx \\
& \leq& c \norm{\varphi^{r+1} g^{(r+1)}}{p}^p
\int_{0}^1 \max\{(1-x)^{\gamma p/2}, (1-x)^{-1+p(\gamma -r+1)/2} \} dx \\
& \leq &c\norm{\varphi^{r+1}g^{(r+1)}}{p}^p .
\end{eqnarray*}
Finally, if $p=2$ and $r=0$, then
\begin{eqnarray*}
\norm{\varphi^\gamma g}{2}^2 &\leq&
2^{1+\gamma} \norm{\varphi g'}{2}^2\int_{0}^1 (1-x)^\gamma \int_0^x\frac{du}{(1-u)}\,dx \\
&\leq&
c \norm{\varphi
g'}{2}^2\int_{0}^1\int_0^x\frac{du}{(1-u)}\,dx \leq c \norm{\varphi
g'}{2}^2.
\end{eqnarray*}
\end{proof}

\sect{Proof of \thm{thm1.4}: the upper estimate}\label{upperest}

The upper estimate of our modulus by the $K$-functional in \thm{thm1.4} (\ie the last inequality in \ineq{maintheorem}) immediately follows from the following lemma.

\begin{lemma}\label{less}
If $k\in\N$, $r\in\N_0$, $1\leq p\leq \infty$ and $f\in\B_p^r$,  then
$$
\wkr(f^{(r)},t)_p\le c(k,r,p)  K^\varphi_{k,r}(f^{(r)},t^k)_p, \quad \mbox{\rm for all } t >0 .
$$
\end{lemma}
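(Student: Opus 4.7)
The plan is to use the standard $K$-functional decomposition: for any $g\in\B_p^{k+r}$ and any $h\in(0,t]$, linearity of $\Delta_{h\varphi(x)}^k$ and the triangle inequality give
$$
\|\wt_{kh}^r(\cdot)\Delta_{h\varphi(\cdot)}^k(f^{(r)},\cdot)\|_p
\le
\|\wt_{kh}^r(\cdot)\Delta_{h\varphi(\cdot)}^k(f^{(r)}-g^{(r)},\cdot)\|_p
+\|\wt_{kh}^r(\cdot)\Delta_{h\varphi(\cdot)}^k(g^{(r)},\cdot)\|_p.
$$
It then suffices to establish two uniform-in-$h$ estimates: (a) $\|\wt_{kh}^r\Delta_{h\varphi}^kF\|_p\le c\|\varphi^rF\|_p$ for every $F$ with $\varphi^rF\in\L_p$, to be applied to $F=f^{(r)}-g^{(r)}$; and (b) $\|\wt_{kh}^r\Delta_{h\varphi}^kg^{(r)}\|_p\le ch^k\|\varphi^{k+r}g^{(k+r)}\|_p$ for every $g\in\B_p^{k+r}$. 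Adding the two, taking $\sup_{0<h\le t}$ (using \ineq{larget} to reduce to $t\le 2/k$), and then $\inf_g$ in Definition~\ref{k-func} then yields the desired bound $\wkr(f^{(r)},t)_p\le cK^\varphi_{k,r}(f^{(r)},t^k)_p$.

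Estimate (a) is routine. Expanding $\Delta_{h\varphi(x)}^k(F,x)$ as the signed sum $\sum_{i=0}^k\binom{k}{i}(-1)^{k-i}F(x+(i-k/2)h\varphi(x))$, applying the triangle inequality, using $\prop{prop}{useful}$ to bound $\wt_{kh}^r(x)\le\varphi^r(x+(i-k/2)h\varphi(x))$ for $x\in\Dom_{kh}$, and performing the change of variables $y=x+(i-k/2)h\varphi(x)$ (whose Jacobian lies in $[1/2,3/2]$ on $\Dom_{kh}$ by $\prop{prop}{3.1}$), each of the $k+1$ summands contributes at most a constant times $\|\varphi^rF\|_p$. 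This is essentially the argument already used in the proof of Lemma~\ref{lem1}.

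The main obstacle is estimate (b). Since $g^{(k+r-1)}\in\AC_{\loc}(-1,1)$, I use the integral representation
$$
\Delta_{h\varphi(x)}^k(g^{(r)},x)=\int_{[-h\varphi(x)/2,\,h\varphi(x)/2]^k}g^{(k+r)}\!\Bigl(x+\sum_{j=1}^k u_j\Bigr)\,du_1\cdots du_k,
$$
rescale $u_j=h\varphi(x)\tau_j$, and invoke Minkowski's integral inequality, reducing the task to showing, uniformly in $\vec\tau\in[-1/2,1/2]^k$,
$$
\bigl\|\wt_{kh}^r(x)\varphi^k(x)g^{(k+r)}(y_x)\bigr\|_{L_p(\Dom_{kh})}\le c\|\varphi^{k+r}g^{(k+r)}\|_p,\qquad y_x:=x+h\varphi(x)(\tau_1+\cdots+\tau_k).
$$
On the inner set $\Dom_{2kh}$ this is clean: $\prop{prop}{compar}$ gives $\varphi(x)\le 2\wt_{kh}(x)$, so $\wt_{kh}^r(x)\varphi^k(x)\le 2^k\wt_{kh}^{k+r}(x)\le 2^k\varphi^{k+r}(y_x)$ by $\prop{prop}{useful}$, and the change of variables $x\mapsto y_x$, whose Jacobian lies in $[1/2,3/2]$ by $\prop{prop}{3.1}$, delivers the bound. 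The hard part is the thin boundary strip $\Dom_{kh}\setminus\Dom_{2kh}$, where $\wt_{kh}(x)$ may vanish while $\varphi(x)\asymp kh$; I would handle this region by a dedicated estimate exploiting that both the strip and its image under $y_x$ have measure $O(h^2)$ and lie in an $O((kh)^2)$-neighbourhood of $\pm1$, combined with a Hardy-type manipulation on $(1-|y|)$ to convert the missing powers of $\varphi(y_x)$ into an integrable weight against the finite quantity $\|\varphi^{k+r}g^{(k+r)}\|_p$.
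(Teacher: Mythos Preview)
Your approach via Minkowski's integral inequality works cleanly on $\Dom_{2kh}$ but has a genuine gap on the boundary strip $\E_{kh}:=\Dom_{kh}\setminus\Dom_{2kh}$: the uniform-in-$\vec\tau$ estimate you reduce to there is simply \emph{false}. Take $r=0$ for clarity and fix $\sigma:=\tau_1+\cdots+\tau_k=k/2$, so that $y_x=x+kh\varphi(x)/2$. As $x$ ranges over the right half of $\E_{kh}$, $y_x$ sweeps an interval reaching all the way to $1$, while $\varphi(x)\asymp kh$ stays bounded away from zero. After your change of variables, the claimed bound becomes essentially
\[
(kh)^{kp}\int_{1-c(kh)^2}^{1}|G(y)|^p\,dy\ \le\ c\int_{-1}^{1}\varphi^{kp}(y)|G(y)|^p\,dy,\qquad G:=g^{(k)}.
\]
Choosing $G(y)=(1-y)^{-\alpha}$ near $1$ with $1/p<\alpha<k/2+1/p$ makes the right side finite (so $g\in\B_p^{k}$ is admissible) while the left side diverges. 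No ``Hardy-type manipulation'' can repair this: once Minkowski has frozen $\vec\tau$, the averaging over the innermost variable that tames the endpoint singularity is gone, and the worst $\vec\tau$ defeats the estimate.

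The paper's proof avoids Minkowski entirely. After the integral representation of $\Delta_{h\varphi(x)}^k(g^{(r)},x)$, it applies H\"older's inequality to the \emph{innermost} integral only, obtaining
\[
\Bigl|\int_{-h\varphi(x)/2}^{h\varphi(x)/2}g^{(k+r)}(x+u+u_k)\,du_k\Bigr|
\le \|\varphi^{k+r}g^{(k+r)}\|_{\L_p(\A(x,u))}\,\|\varphi^{-k-r}\|_{\L_q(\A(x,u))},
\]
with $\A(x,u)=[x+u-h\varphi(x)/2,\,x+u+h\varphi(x)/2]$. This preserves the averaging in $u_k$ and shifts the difficulty to bounding the purely geometric quantity
\[
\F_q(x,k,r)=\wt_{kh}^r(x)\int_{-h\varphi(x)/2}^{h\varphi(x)/2}\!\!\cdots\!\int_{-h\varphi(x)/2}^{h\varphi(x)/2}\|\varphi^{-k-r}\|_{\L_q(\A(x,u_1+\cdots+u_{k-1}))}\,du_1\cdots du_{k-1},
\]
which is then controlled on $\E_{kh}$ through a (necessary) case analysis in $k$, $p$, and the size of $h$. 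Replacing your Minkowski step on the strip by this H\"older step is what is missing from your argument.
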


\begin{proof}
In view of \ineq{larget} and the monotonicity of the $K$-functional in $t$, we may assume that $t\leq 2/k$.
Take any $g\in\B_p^{k+r}$.
\cor{corsec2} implies that $g\in\B_p^r$, whence
\[
\wkr(f^{(r)},t)_p\leq\wkr(f^{(r)}-g^{(r)},t)_p+\wkr(g^{(r)},t)_p .
\]
Take $h$ such that $0<h \leq  t$.

For each  $0\leq i\leq k$,   put
$y_i(x):=x+(i-k/2)h\varphi(x)$.
Then \prop{prop}{3.1} implies that $y_i'(x) \geq 1/2$, for $x\in\Dom_{kh}$, and so we have
(with obvious modifications if $p=\infty$)
\begin{eqnarray*}
\lefteqn{ \|\varphi^r(y_i)(f^{(r)}(y_i)-g^{(r)}(y_i)))\|_{L_p(\Dom_{kh})} }\\
&=& \left( \int_{\Dom_{kh}}\varphi^{rp}(y_i(x))|f^{(r)}(y_i(x))-g^{(r)}(y_i(x))|^p\,dx \right)^{1/p}\\
&\le& 2^{1/p} \left(\int_{-1}^1\varphi^{rp}(y)|f^{(r)}(y)-g^{(r)}(y)|^p\,dy\right)^{1/p} \\
&=&2^{1/p} \|\varphi^r(f^{(r)}-g^{(r)})\|_p .
\end{eqnarray*}

Since $\wt_\delta(x)\leq\varphi(y)$ for all $x\in\Dom_\delta$,
$y\in \left[x-\delta\varphi(x)/2,x+\delta\varphi(x)/2\right]$ and
$0<\delta \leq 2$, we get
\begin{eqnarray*}
 \wkr(f^{(r)}-g^{(r)},t)_p
&\leq&\sup_{0< h \leq t} \norm{\sum_{i=0}^k {\binom ki}
\varphi^r(y_i)\left|f^{(r)}(y_i)-g^{(r)}(y_i)\right|}{\Lp(\Dom_{kh})} \\
&\leq& 2^{k+1/p}\norm{\varphi^r(f^{(r)}-g^{(r)})}{p}.
\end{eqnarray*}
To estimate the second term $\wkr(g^{(r)},t)_p $,  using the identity
\be \label{diffid}
\Delta_h^k(f,x) = \int_{-h/2}^{h/2}\cdots
\int_{-h/2}^{h/2}f^{(k)}(x+u_1+\cdots+u_k)du_1\cdots du_k ,
\ee
we have
\begin{eqnarray*}
\lefteqn{ \wkr(g^{(r)},t)_p }\\
&=&\sup_{0<h\leq t}\norm{
\wt_{kh}^r\Delta_{h\varphi}^k(g^{(r)},\cdot)}{\Lp(\Dom_{kh})}  \\
&=&\sup_{0<h\leq t}\norm{ \wt_{kh}^r
\int_{-h\varphi/2}^{h\varphi/2}\cdots\int_{-h\varphi/2}^{h\varphi/2}
g^{(k+r)}(\cdot+u_1+\cdots+u_k)du_1\cdots du_k}{\Lp(\Dom_{kh})} .
\end{eqnarray*}
By H\"older's inequality (with $1/p + 1/q=1$), for each $u$
satisfying $-1<x+u-h\varphi(x)/2<x+u+h\varphi(x)/2<1$, we have
\begin{eqnarray*}
\left|\int_{-h\varphi(x)/2}^{h\varphi(x)/2} g^{(k+r)}(x
+u+u_k)du_k\right|&=&\left|\int_{x+u-h\varphi(x)/2}^{x+u+h\varphi(x)/2}
g^{(k+r)}(v)dv\right|\\
&\le&\int_{x+u-h\varphi(x)/2}^{x+u+h\varphi(x)/2}
\frac{\varphi^{k+r}(v)|g^{(k+r)}(v)|}{\varphi^{k+r}(v)}dv\\
&\le&\|\varphi^{k+r}g^{(k+r)}\|_{L_p(\mathcal{A}(x,u))}\|\varphi^{-k-r}\|_{L_q(\mathcal{A}(x,u))} ,
\end{eqnarray*}
where
$$
\mathcal{A}(x,u):=\left[x+u-\frac h2\varphi(x), x+u+\frac
h2\varphi(x)\right].
$$
Thus, in order to complete the proof, it suffices to prove
\begin{eqnarray} \label{kge1}
&& \int_{\Dom_{kh}}\biggl(\wt_{kh}^r(x)\int_{-h\varphi(x)/2}^{h\varphi(x)/2}
\cdots\int_{-h\varphi(x)/2}^{h\varphi(x)/2}\|\varphi^{-k-r}\|_{\Lq(\mathcal{A}(x,u_1+\cdots+u_{k-1}))}
\\ \nonumber
&&\qquad\times
\|\varphi^{k+r}g^{(k+r)}\|_{L_p(\mathcal{A}(x,u_1+\cdots+u_{k-1}))}du_1\cdots
du_{k-1}\biggr)^pdx\\ \nonumber
&&\le ch^{kp}\|g^{(k+r)}\varphi^{k+r}\|_p^p ,
\end{eqnarray}
noting that, in the case $k=1$, this inequality is understood as
\begin{eqnarray} \label{keq1}
&& \int_{\Dom_{h}}\biggl(\wt_{h}^r(x)
 \|\varphi^{-1-r}\|_{\Lq(\mathcal{A}(x,0))}
\|\varphi^{1+r}g^{(1+r)}\|_{L_p(\mathcal{A}(x,0))}
 \biggr)^pdx\\ \nonumber
&&\le ch^{p}\|g^{(1+r)}\varphi^{1+r}\|_p^p ,
\end{eqnarray}
 and, if $p=\infty$, then \ineq{kge1} is replaced by
\begin{eqnarray} \label{keqinf}
 && \sup_{x\in \Dom_{kh}}\biggl(\wt_{kh}^r(x)\int_{-h\varphi(x)/2}^{h\varphi(x)/2}
\cdots\int_{-h\varphi(x)/2}^{h\varphi(x)/2}\|\varphi^{-k-r}\|_{\L_1(\mathcal{A}(x,u_1+\cdots+u_{k-1}))}
\\ \nonumber
&&\qquad\times
\|\varphi^{k+r}g^{(k+r)}\|_{L_\infty (\mathcal{A}(x,u_1+\cdots+u_{k-1}))}du_1\cdots
du_{k-1}\biggr)  \\ \nonumber
&&\le ch^{k }\|g^{(k+r)}\varphi^{k+r}\|_\infty  .
\end{eqnarray}
To this end we write,
$$
\int_{\Dom_{kh}}=\int_{\Dom_{2kh}}+\int_{(\Dom_{kh}\setminus
\Dom_{2kh})\cap[0,1]}+\int_{(\Dom_{kh}\setminus
\Dom_{2kh})\cap[-1,0]}=:I_1(p)+I_2(p)+I_3(p)
$$
if $1\leq p<\infty$, and
\[
\sup_{\Dom_{kh}} \leq \sup_{\Dom_{2kh}}+\sup_{(\Dom_{kh}\setminus
\Dom_{2kh})\cap[0,1]}+\sup_{(\Dom_{kh}\setminus
\Dom_{2kh})\cap[-1,0]}=:I_1(\infty)+I_2(\infty)+I_3(\infty)
\]
if $p=\infty$.

{\bf \vspace{0.5em} \noindent Part I: estimate of $I_1$} \vspace{0.5em}

First, we note that, if $h>1/k$, then  $\Dom_{2kh}=\emptyset$, and so no estimate of $I_1$ is needed.
Hence, in this part, we may assume that $h\leq 1/k$.

For $x\in\Dom_{2\delta}$ and
$u\in[x-\delta\varphi(x)/2,x+\delta\varphi(x)/2]$, we have
$$
\frac12(1-|u|)\le\frac12(1-|x|+\delta\varphi(x)/2)\le1-|x|\le2(1-|x|-\delta\varphi(x)/2)\le2(1-|u|),
$$
where for the second and third inequalities we applied the fact that
$\delta\varphi(x)\le1-|x|$. Also, obviously,
$$
\frac12(1+|u|)\le1+|x|\le2(1+|u|).
$$
Hence, for $x\in\Dom_{2\delta}$ and
$u\in[x-\delta\varphi(x)/2,x+\delta\varphi(x)/2]$,
\be \label{phi}
\frac12\varphi(u)\le\varphi(x)\le2\varphi(u).
\ee
Also, note  that $\delta\varphi(x)\le1-|x|$ (\ie $x\in \Dom_{2\delta}$) implies
\be \label{delta}
\delta\le\varphi(x).
\ee
So, if $x\in\Dom_{2kh}$, then by\ineq{phi},
\begin{align*}
&\wt_{kh}^r(x)\int_{-h\varphi(x)/2}^{h\varphi(x)/2}
\cdots\int_{-h\varphi(x)/2}^{h\varphi(x)/2}\|\varphi^{-k-r}\|_
{\Lq(\mathcal{A}(x,u_1+\dots+u_{k-1}))}\,du_1\cdots\,du_{k-1}
\\&
\le\varphi^{r}(x)(h\varphi(x))^{k-1}\frac{2^{k+r}}{\varphi^{k+r}(x)}(h\varphi(x))^{1/q}
\\&=2^{k+r}h^{k-1+1/q}\varphi^{1/q-1}(x)=2^{k+r}h^{k-1/p}\varphi^{-1/p}(x),
\end{align*}
where we applied \prop{prop}{compare}. Note that the above estimate
is also valid %for $q=\infty$ and
for $k=1$.

Therefore by \eqref{phi} and \eqref{delta}, for $1\leq p< \infty$ we have
\begin{eqnarray*}
I_1(p) &\le&\int_{\Dom_{2kh}}\biggl(2^{k+r}h^{k-1/p}\varphi^{-1/p}(x)
\|\varphi^{k+r}g^{(k+r)}\|_{L_p([x-kh\varphi(x)/2,x+kh\varphi(x)/2])}\biggr)^pdx\\
&=&
2^{p(k+r)}h^{kp-1}\int_{\Dom_{2kh}}\frac1{\varphi(x)}\int_{x-kh\varphi(x)/2}^{x+kh
\varphi(x)/2}\left|\varphi^{k+r}(u)g^{(k+r)}(u)\right|^pdudx\\
&\le&ch^{kp-1}\int_{\Dom_{2kh}}\int_{x-kh\varphi(x)/2}^{x+kh\varphi(x)/2}\frac1{\varphi(u)+kh/2}
\left|\varphi^{k+r}(u)g^{(k+r)}(u)\right|^pdudx\\
&=&ch^{kp-1}\int_{a_1}^{a_2}\int_{b_1(u)}^{b_2(u)}\frac1{\varphi(u)+kh/2}
\left|\varphi^{k+r}(u)g^{(k+r)}(u)\right|^pdxdu\\
&=&ch^{kp-1}\int_{a_1}^{a_2}\left|\varphi^{k+r}(u)g^{(k+r)}(u)\right|^p\frac{b_2(u)-b_1(u)}{\varphi(u)+kh/2}du,
\end{eqnarray*}
where $-1<a_1<a_2<1$ and
$$
b_2(u)-b_1(u)\le \frac{kh\sqrt{1-u^2+(kh/2)^2}}{1+(kh/2)^2}\le kh(\varphi(u)+ kh/2 ).
$$
Hence,
$$
I_1(p)\le ch^{kp}\|g^{(k+r)}\varphi^{k+r}\|_p^p.
$$
If $p=\infty$, then
\begin{eqnarray*}
I_1(\infty) &\le&\sup_{\Dom_{2kh}}\biggl(2^{k+r}h^{k}
\|\varphi^{k+r}g^{(k+r)}\|_{L_\infty([x-kh\varphi(x)/2,x+kh\varphi(x)/2])}\biggr) \\
&\leq & c h^k \|\varphi^{k+r}g^{(k+r)}\|_{\infty }  .
\end{eqnarray*}

 {\bf \vspace{0.5em} \noindent Part II: estimate of $I_2$} \vspace{0.5em}

In this part, we estimate $I_2$, the estimate of $I_3$ being completely analogous.
It is convenient to introduce the notation

\[
 \F_q(x,k,r):=\wt_{kh}^r(x)\int_{-h\varphi(x)/2}^{h\varphi(x)/2}
\cdots\int_{-h\varphi(x)/2}^{h\varphi(x)/2}
\|\varphi^{-k-r}\|_{\Lq(\mathcal{A}(x,u_1+\cdots+u_{k-1}))}
du_1\cdots du_{k-1},
\]
\[
\F_q(x,k):=\F_q(x,k,0),
\]
and
\[
\E_{kh} := (\Dom_{kh}\setminus \Dom_{2kh})\cap[0,1].
\]

The required estimates for $I_2(p)$ and $I_2(\infty)$ follow, respectively, from
\be\label{kge1a}
\int_{\E_{kh}}\left(\F_q(x, k, r)\right)^pdx
\le ch^{kp},
\ee
and
\be\label{keqinfa}
\sup_{x\in\E_{kh}}\F_1(x, k, r)\le ch^k.
\ee

First we observe that if $v\in \mathcal{A}(x,u_1+\cdots+u_{k-1})$ and $|u_i|\le h\varphi(x)/2$, then
\[
x-kh\varphi(x)/2\le v\le x+kh\varphi(x)/2,
\]
which, by \prop{prop}{useful} implies, for $x\in\Dom_{kh}$, that
\[
\wt_{kh}(x)\le\varphi(v).
\]
Hence
\[
\wt_{kh}^r(x)\|\varphi^{-k-r}\|_{\Lq(\mathcal{A}(x,u_1+\cdots+u_{k-1}))}
\le \|\varphi^{-k}\|_{\Lq(\mathcal{A}(x,u_1+\cdots+u_{k-1}))},
\]
so that
\[
\F_q(x,k,r)\le\F_q(x,k),\quad x\in\Dom_{kh}.
\]
Thus, \ineq{kge1a} and \ineq{keqinfa} follow, respectively, from
\be\label{kge1ab}
\int_{\E_{kh}}\left(\F_q(x,k)\right)^pdx
\le ch^{kp},
\ee
and
\be  \label{keqinfab}
\sup_{x\in\E_{kh}}\F_1(x,k)\le ch^k.
\ee

 Recall that $\mu(\delta) = 2\delta^2/(4+\delta^2)$ and note that
$\E_{kh}=(1-\mu(2kh),1-\mu(kh)]\cap[0,1]$, \ie
\[
\E_{kh}=
\begin{cases}
(1-\mu(2kh),1-\mu(kh)] , & \mbox{\rm if}\quad   h\leq 1/k , \\
 [0,1-\mu(kh)], & \mbox{\rm if}\quad   1/k<h\leq2/k .
 \end{cases}
 \]

It will be convenient for us to separate the proof   for ``small'' and ``large''  $h$.
We first consider the case when $h\leq 1/(\sqrt{2} k)$.

{\bf \vspace{0.5em} \noindent Part II(i): $h\leq 1/(\sqrt{2} k)$} \vspace{0.5em}

It is easy to see that, if $h\leq 1/(\sqrt{2} k)$, then
\[
{8 k^2h^2 \over 9} \leq \meas(\E_{kh}) \leq  {3  k^2h^2 \over 2}
\]
and, for $x\in \E_{kh}$,
 \[
 {4 k^2h^2 \over 9} \leq 1-x \leq 2k^2h^2 \andd  { 2kh \over 3} \leq\varphi(x)\leq2kh .
 \]
It is important to note that, if $h\leq 1/(\sqrt{2} k)$ and $x\in \E_{kh}$, then
\be \label{smallh}
x - kh\varphi(x)/2 \geq 0 .
\ee
This implies that, if  $v\in \A(x, u_1+\dots u_{k-1})$ where $x\in \E_{kh}$ and
  $|u_i|\le h\varphi(x)/2$,  then $v\geq 0$ and so
\[
\sqrt{1-v} \leq \varphi(v) \leq \sqrt{2(1-v)} .
\]

Now, for any $q<\infty$,   $x\in \E_{kh}$ and $u\in [-(k-1)h\varphi(x)/2, (k-1)h\varphi(x)/2]$,  we have
\begin{eqnarray} \label{eqnarp}
 \lefteqn{  \|\varphi^{-k }\|_{\L_q(\mathcal{A}(x,u))}  } \\ \nonumber
&\le&
\left(\int_{x+u -
h \varphi(x)/2}^{x+u+
h \varphi(x)/2} (1-v)^{- kq/2} dv\right)^{1/q}\\ \nonumber
& \le  & c(k,q)
\begin{cases}
\left(1-x  -u -  h \varphi(x)/2\right)^{-k/2+1/q} , & \mbox{\rm if } kq > 2 , \\
\left(  \ln {1-x -u  +  h \varphi(x)/2 \over  1-x -u -  h \varphi(x)/2 }  \right)^{1/q} , & \mbox{\rm if } kq = 2 ,\\
h^{-k +2/q} , & \mbox{\rm if } kq < 2 ,
\end{cases}
\end{eqnarray}
 and note that \ineq{eqnarp} is also valid if $q=\infty$.

We also observe that, if $k\geq 2$, then for any  $x\in \E_{kh}$ and $u\in [-(k-2)h\varphi(x)/2, (k-2)h\varphi(x)/2]$,
\be \label{ineqln}
\int_{-h\varphi(x)/2}^{h\varphi(x)/2}  \int_{-h\varphi(x)/2}^{h\varphi(x)/2} (1-x-u-u_1-u_2)^{-1} du_1 du_2 \leq c h^2 .
\ee
Indeed, using the fact that $1-x-u\geq h\varphi(x)$ and changing variables to $v:= - 2u_1/(h\varphi(x))$ and $w:= - 2u_2/(h\varphi(x))$,  we have
\begin{eqnarray*}
\lefteqn{ \int_{-h\varphi(x)/2}^{h\varphi(x)/2}  \int_{-h\varphi(x)/2}^{h\varphi(x)/2} (1-x-u-u_1-u_2)^{-1} du_1 du_2 }\\
& \leq& \int_{-h\varphi(x)/2}^{h\varphi(x)/2}  \int_{-h\varphi(x)/2}^{h\varphi(x)/2} (h\varphi(x)-u_1-u_2)^{-1} du_1 du_2 \\
& = & \frac{h\varphi(x)}{2}  \int_{-1}^{1}  \int_{-1}^{1} (2+v+w)^{-1} dv dw \\
&= & (2\ln 2) h \varphi(x) \leq c(k)  h^2.
\end{eqnarray*}
Thus, \ineq{ineqln} implies that, for any $k\geq 2$, $\alpha > 1-k$, $x\in \E_{kh}$ and $u\in [- h\varphi(x)/2,  h\varphi(x)/2]$,
\begin{eqnarray}\label{auxln}
\int_{-h\varphi(x)/2}^{h\varphi(x)/2} \cdots  \int_{-h\varphi(x)/2}^{h\varphi(x)/2} &(1-x-u-u_1-\cdots -u_{k-1})^{\alpha}du_1\dots du_{k-1}
\\&\nonumber\leq ch^{2k+2\alpha-2} .
\end{eqnarray}

Now, for any  $x\in \E_{kh}$,  we have
\begin{eqnarray*}
  \F_1(x,k)
  & = & \int_{-h\varphi(x)/2}^{h\varphi(x)/2} \cdots\int_{-h\varphi(x)/2}^{h\varphi(x)/2}   \varphi^{-k}(x+u_1+\dots + u_k)   du_1\cdots du_{k}\\
 & \le &
 \int_{-h\varphi(x)/2}^{h\varphi(x)/2} \cdots\int_{-h\varphi(x)/2}^{h\varphi(x)/2}     (1-x-u_1-\cdots-u_{k})^{-k/2}  du_1\cdots du_{k}\\
 & \le & c h^k ,
\end{eqnarray*}
which implies \ineq{keqinfab} and so completes the proof in the case $p=\infty$.

For $1\leq p<\infty$, %because of different cases in \ineq{eqnarp}
it is convenient to break the proof of \ineq{kge1ab} into several cases.

\case{$1\leq p<\infty$,  $k=1$ and $q>2$}

Using \ineq{eqnarp} we have,

\begin{align*}
\int_{\E_{h}}\left(\F_q(x,1)\right)^pdx&=\int_{\E_{h}}
\|\varphi^{-1}\|_{\L_q(\mathcal{A}(x,0))}^p dx\\
&\le c\int_{\E_{h}} (1-x-h\varphi(x)/2)^{-p/2+p/q} dx\\
& =   c \int_{1-\mu(2h)}^{1-\mu(h)}  (1-x-h\varphi(x)/2)^{p/2 -1 } dx \\
& \le
c  \int_{1/(1+h^2)}^{1}
  (1-y)^{p/2 -1 } dy \\
  & \le   c h^p   .
\end{align*}

\case{$1\leq p<\infty$, $k=1$ and  $q=2$}

Applying \ineq{eqnarp} we obtain,
\begin{eqnarray*}
\int_{\E_{h}}\left(\F_2(x,1)\right)^2 dx
&\le&c\int_{1-\mu(2h)}^{1-\mu(h)}\ln\left(1+\frac{h\varphi(x)}{1-x-h\varphi(x)/2}\right)\,dx\\
&\le&c\int_{1-\mu(2h)}^{1-\mu(h)}\ln\left(1+\frac{2h^2}{1-x-h\varphi(x)/2}\right)\,dx\\
&\le&c\int^1_{1/(1+h^2)}
\ln\left(1+\frac{2h^2}{1-y}\right)dy\\
&\le&ch^2  .
\end{eqnarray*}

\case{$1\leq p<\infty$, $k=1$ and  $q<2$}

We apply \ineq{eqnarp} and get
 \[
\int_{\E_{h}}\left(\F_q(x,1)\right)^pdx\le c\int_{\E_{h}}h^{p-2}dx
\le c\meas(\E_{h}) h^{p-2}\le ch^p.
\]

\case{$1\leq p<\infty$, $k\ge2$ and $2/p<k$}

Note that in this case, $kq>2$ and $k/2+1/q >1$, and
applying \ineq{eqnarp} and \ineq{auxln}, we get
\begin{eqnarray*}
\lefteqn{\int_{\E_{kh}}\left(\F_q(x,k)\right)^pdx}\\
&\le&c\int_{\E_{kh}}\biggl(\int_{-h\varphi(x)/2}^{h\varphi(x)/2}\cdots
\int_{-h\varphi(x)/2}^{h\varphi(x)/2}\\
& &(1-x-u_1-\dots-u_{k-1}-h\varphi(x)/2)^{-k/2+1/q}du_1\cdots du_{k-1}\biggr)^pdx\\
& \le & c \meas(\E_{kh}) h^{(k-2+2/q)p}\le ch^{kp}.
\end{eqnarray*}

\case{$1\leq p<\infty$ and $2/p\geq k\geq 2$}

Since $2\leq k\leq 2/p$ and $p\geq 1$ can hold simultaneously only if $k=2$ and $p=1$,  using \ineq{eqnarp} for  $q=\infty$   we have
\begin{eqnarray*}
\lefteqn{\int_{\E_{2h}}\F_\infty(x,2)dx}\\
& = & \int_{ \E_{2h} }    \int_{-h\varphi(x)/2}^{h\varphi(x)/2}
\|\varphi^{-2}\|_{\L_\infty(\mathcal{A}(x,u))} du  dx\\
&\le&
c  \int_{\E_{2h}}  \int_{-h\varphi(x)/2}^{h\varphi(x)/2}
 (1-x-u -  h\varphi(x)/2)^{-1 }du   dx \\
&=&c\int_{1-\mu(4h)}^{1-\mu(2h)}
\ln\left(1+\frac{h\varphi(x)}{1-x-h\varphi(x)}\right)dx \\
&\le&c\int_{1-\mu(4h)}^{1-\mu(2h)}
\ln\left(1 + \frac{4h^2}{1-x-h\varphi(x)}\right)dx \\
&\le&c\int_{1/(1+4h^2)}^{1}
\ln\left(1+\frac{4h^2}{1-y}\right)dy \\
&\le&ch^2 .
\end{eqnarray*}

It remains to consider the case when $1/(\sqrt{2} k) < h \leq 2/k$.

{\bf \vspace{0.5em} \noindent Part II(ii): $1/(\sqrt{2} k) < h \leq 2/k$} \vspace{0.5em}

If $1/(\sqrt{2} k) < h \leq 2/k$, then,  for $x\in \E_{kh}$, $h\sim 1-x \sim \varphi(x) \sim c(k)$, and
 $\meas(\E_{kh}) \leq c(k)$ (``$\leq$'' cannot  be replaced with ``$\sim$'' since $\meas(\E_{2})=0$).
 Inequalities \ineq{kge1ab} and \ineq{keqinfab} which we need to verify become
 \be\label{kge1abc}
\int_{\E_{kh}}\left(\F_q(x,k)\right)^pdx
\le c ,
\ee
and
\be  \label{keqinfabc}
\sup_{x\in\E_{kh}}\F_1(x,k)\le c .
\ee

We can prove \ineq{kge1abc} and \ineq{keqinfabc} using the   proof   used  in Part II(i) with the only difference that we can no longer use the fact that $\varphi(v) \sim \sqrt{1-v}$ for
$v\in \A(x, u_1+\dots + u_{k-1})$ with $x\in\E_{kh}$ and $|u_i| \leq h\varphi(x)/2$. At the same time, since we no longer need to keep track of powers of $h$'s, this proof can be considerably simplified.

First, let $F\in C[-1,1]$ be such that $F^{(k)}(x) = \varphi^{-k}(x)$, $x\in(-1,1)$. Applying the identity \ineq{diffid} for any $x\in \E_{kh}$, we have

% First, for any  $x\in \E_{kh}$,  letting $F$ to be such that $F^{(k)}(x) = \varphi^{-k}(x)$,   noting that $F\in\C[-1,1]$  and using the identity \ineq{diffid},    we have
\begin{eqnarray*}
  \F_1(x,k)
   & = &   \int_{-h\varphi(x)/2}^{h\varphi(x)/2} \cdots\int_{-h\varphi(x)/2}^{h\varphi(x)/2}    \varphi^{-k}(x+u_1+\dots + u_k)   du_1\cdots du_{k} \\
   & =&   \Delta_{h\varphi(x)}^k (F, x) \leq c ,
\end{eqnarray*}
which implies \ineq{keqinfabc} and so completes the proof in the case $p=\infty$.

Now,  observing that, for   $-1<a<b<1$,
\be\label{ineqvarphinew}
\int_a^b \varphi^{\gamma}(t) dt \leq c(\gamma)
\begin{cases}
  \varphi^{2+\gamma}(a) + \varphi^{2+\gamma}(b) , & \mbox{\rm if } \gamma <-2 ,\\
  1 , & \mbox{\rm if } \gamma > -2 ,
  \end{cases}
\ee
we conclude that
\[
\| \varphi^{-k}\|_{\L_q[a,b]}  \leq c(k,q)
\begin{cases}
  \varphi^{ -k +2/q}(a) + \varphi^{ -k +2/q}(b) , & \mbox{\rm if } kq > 2 ,\\
  1 , & \mbox{\rm if } kq <2  .
  \end{cases}
\]
Therefore, in particular,
\begin{eqnarray*}
\F_q(x,1) &=&  \| \varphi^{-1}\|_{\L_q\left[ x-h\varphi(x)/2, x+h\varphi(x)/2 \right]}  \\
&\leq&  c(k,q)
\begin{cases}
  \varphi^{ -1 +2/q}(x-h\varphi(x)/2) + \varphi^{ -1 +2/q}(x+h\varphi(x)/2) , & \mbox{\rm if }  q > 2 ,\\
  1 , & \mbox{\rm if }  q <2  .
  \end{cases}
\end{eqnarray*}
 Hence, \ineq{kge1abc} is verified if $k=1$ and $q<2$ ($p>2$), and for $q>2$ ($1\leq p <2$) we have
 \begin{eqnarray*}
\int_{\E_{h}}\left(\F_q(x,1)\right)^pdx & \leq & c \int_{\Dom_{h}} \left( \varphi^{p-2 }(x-h\varphi(x)/2) + \varphi^{p-2}(x+h\varphi(x)/2) \right) dx \\
&\leq &
c \int_{\Dom_{h}} \varphi^{p-2}(x+h\varphi(x)/2)dx \\
&\leq &
c \int_{-1}^1 \varphi^{p-2}(v)dv \leq c .
\end{eqnarray*}
Finally, if $k=1$ and $p=q=2$, then we
observe that, for a centrally symmetric set $S\subset \R^2$, we have
$\iint_S f(-\bar x) d\bar x = \iint_S f(\bar x) d\bar x$.
Hence,
 \begin{eqnarray*}
\int_{\E_{h}}\left(\F_2(x,1)\right)^2dx & = & \int_{\E_{h}} \int_{x-h\varphi(x)/2}^{x+h\varphi(x)/2} \frac{1}{1-t^2} dt dx \\
& \leq & c \int_{\Dom_{h}} \int_{x-h\varphi(x)/2}^{x+h\varphi(x)/2} \frac{1}{1-t} dt dx \\
& \leq & c - c \int_{\Dom_{h}} \ln(1-x-h\varphi(x)/2) dx \\
& \leq & c - c \int_{-1}^1 \ln(1-v) dv \leq c ,
\end{eqnarray*}
and so \ineq{kge1abc} is verified for all $1\leq p<\infty$ if $k=1$.

If $1\leq p <\infty$ and $k\geq 2$, then $kq>2$ and so
  \begin{eqnarray*}
   \F_q(x,k)
 & \leq &
 c \int_{-h\varphi(x)/2}^{h\varphi(x)/2}\cdots
\int_{-h\varphi(x)/2}^{h\varphi(x)/2} \left( \varphi^{-k+2/q}(x+u_1+\dots+u_{k-1}-h\varphi(x)/2) \right.    \\
& & \left. +
\varphi^{-k+2/q}(x+u_1+\dots+u_{k-1}+h\varphi(x)/2)  \right)   du_1\cdots du_{k-1} .
\end{eqnarray*}
Again, let $F\in C[-1,1]$, be such that $F^{(k-1)}(x) = \varphi^{-k+2/q}(x)$ (this is possible provided $k+2/q>2$). Applying the identity \ineq{diffid}, we have
% Letting $F$ to be such that $F^{(k-1)}(x) = \varphi^{-k+2/q}(x)$,   noting that $F\in\C[-1,1]$ (provided that $k/2+1/q >1$)  and using the identity \ineq{diffid},    we have
  \begin{eqnarray*}
   \F_q(x,k) & \leq & c \Delta_{h\varphi(x)}^{k-1} (F, x-h\varphi(x)/2) + c \Delta_{h\varphi(x)}^{k-1} (F, x+h\varphi(x)/2) \leq c .
   \end{eqnarray*}
This implies \ineq{kge1abc} in all remaining cases except for $k=2$ and $p=1$ ($q=\infty$).
Finally, twice using the fact that $\left\{(x,u) \st x\in\Dom_{2h} , \; |u|\leq h\varphi(x)/2 \right\}$ is centrally symmetric, we have
 \begin{eqnarray*}
\lefteqn{\int_{\E_{2h}}\F_\infty(x,2)dx}\\
%& = & \int_{ \E_{2h} }    \int_{-h\varphi(x)/2}^{h\varphi(x)/2}
%\|\varphi^{-2}\|_{\L_\infty(\mathcal{A}(x,u))} du  dx\\
& \le & c \int_{ \E_{2h} } \int_{-h\varphi(x)/2}^{h\varphi(x)/2} \left( \varphi^{-2}(x+u -h\varphi(x)/2)
  +
\varphi^{-2}(x+u+h\varphi(x)/2)  \right)   du dx\\
& \leq&
c \int_{ \Dom_{2h} } \int_{-h\varphi(x)/2}^{h\varphi(x)/2} \varphi^{-2}(x+u+h\varphi(x)/2)du dx\\
&\leq&
c \int_{ \Dom_{2h} } \int_{-h\varphi(x)/2}^{h\varphi(x)/2} (1-x-u-h\varphi(x)/2)^{-1} du dx\\
&=&c\int_{\Dom_{2h}}
\left( \ln(1-x)  - \ln \bigl(1-x-h\varphi(x)\bigr) \right) dx
 \\
&\le&c\int_{-1}^{1 }|\ln(1-x)|dx \leq c .
\end{eqnarray*}
This completes the proof of the lemma.
\end{proof}
\vspace{1cm}

\sect{Weighted DT moduli}\label{sec3}

The following weighted DT moduli are defined in \cite[p.
218]{dt} (with $D=(0,1)$).
\begin{eqnarray*}
\omega_{\psi}^k(f,t)_{w,p} & := & \sup_{0<h\leq t}
\norm{w\Delta_{h\psi}^kf}{\Lp[t_0^*, 1-t_1^*]} \\
&&\ + \sup_{0< h\leq
t_0^*} \norm{w \overrightarrow\Delta_h^kf}{\Lp[0,12t_0^*]} +
\sup_{0< h\leq t_1^*}\norm{w
\overleftarrow\Delta_h^kf}{\Lp[1-12t_1^*,1]},
\end{eqnarray*}
where if $\psi(x)=\sqrt{x(1-x)}$, then $t_0^*=t_1^*=k^2t^2$.

It was shown in \cite[Theorem 6.1.1]{dt} that,
under certain restrictions on  $\psi$ and $w$,
$\omega_{\psi}^k(f,t)_{w,p}$ is equivalent to the following weighted $K$-functional $K_{k,\psi}(f,t^k)_{w,p}$:
\[
K_{k,\psi}(f,t^k)_{w,p}:=\inf_{g^{(k-1)}\in\AC_\loc}\left(
\|(f-g)w\|_{\L_p(D)} +t^k\|w\psi^kg^{(k)}\|_{\L_p(D)}\right).
\]

In particular, with  obvious modifications for $(-1,1)$ instead of $D=(0,1)$, $\psi := \varphi$ and $w := \varphi^r$, we have
\begin{eqnarray} \label{dtweighted}
  %\tilde\omega_{k,r}^\varphi(f,t)_p:=
&&\omega_{\varphi}^k(f,t)_{\varphi^r,p} \nonumber\\
&&\ =\sup_{0<h\leq
t}\norm{\varphi^r\Delta_{h\varphi}^kf}{\Lp[-1+t^*,1-t^*]}\\ \nonumber
&&\qquad +\sup_{0<h\leq
t^*}\norm{\varphi^r\overrightarrow{\Delta}_{h}^kf}{\Lp[-1,-1+At^*]} +
\sup_{0<h\leq t^*}\norm{\varphi^r
\overleftarrow{\Delta}_{h}^kf}{\Lp[1-At^*,1]},
\end{eqnarray}
where $t^* := 2k^2 t^2$ and $A$ is an absolute constant (for
example, $A=12$ as in \cite{dt}), and
note that
it is readily seen that the $K$-functional defined in Definition
\ref{k-func}, satisfies
\[
K_{k,r}^\varphi(f,t^k)_{p}=K_{k,\varphi}(f, t^k)_{\varphi^r
,p}=\inf_{g^{(k-1)}\in\AC_\loc} \left( \|(f-g)\varphi^r\|_p +
t^k\|\varphi^{k+r} g^{(k)}\|_p \right).
\]

 It follows from \cite[Theorem 6.1.1]{dt} that
 \be\label{equivalence}
M^{-1} \omega_{\varphi}^k(f,t)_{\varphi^r,p}
\leq K_{k,r}^\varphi(f,
t^k)_{p}\leq M\omega_{\varphi}^k(f,t)_{\varphi^r,p} \,, \ee for some
$M>1$ and   $0<t\leq t_0$.

A similar quantity to the following averaged modulus was considered
in \cite[(6.1.9)]{dt} (recall that $t^* := 2k^2 t^2$):
%\begin{eqnarray}
\ba\label{dtaveraged}
\omega_{\varphi}^{*k}(f,t)_{w,p} &=& \left(
\frac1t \int_0^t \int _{-1+t^*}^{1-t^*} |w(x) \Delta^k_{\tau
\varphi(x)}(f,x)|^p \, dx\, d\tau \right)^{1/p}\\ \nonumber && +
\left( \frac1{t^*} \int_0^{t^*} \int_{-1}^{-1+At^*}  |w(x)
\overrightarrow{\Delta}^k_{u}(f,x)|^p \, dx\, du \right)^{1/p}\\
\nonumber && + \left( \frac1{t^*} \int_0^{t^*} \int_{1-At^*}^1 |w(x)
\overleftarrow{\Delta}^k_{u}(f,x)|^p \, dx\, du \right)^{1/p},
\ea
where $1\leq p <\infty$.

Also, from the statement in \cite[p. 57]{dt}, we conclude that, for sufficiently small $t>0$,
\be \label{dtupper}
K_{k,r}^\varphi(f,t^k)_{p}\leq M_1 \omega_{\varphi}^{*k}(f,
t)_{\varphi^r,p}.
\ee

\sect{Proof of \thm{thm1.4}: the lower estimate}\label{lowerest}

We will apply \ineq{dtupper} (for $1\leq p <\infty$) and the second inequality in \ineq{equivalence} (for $p=\infty$) in order to complete the proof of the lower estimate in \thm{thm1.4}.

\begin{lemma} \label{auxlemmanew} Let   $k\in\N$, $r\in\N_0$, $1\leq p < \infty$ and $f\in\B_p^r$. Then
\[
\omega_{\varphi}^{*k}(f^{(r)},t)_{\varphi^r,p}\leq
c(k,r) \wkrav(f^{(r)},c(k)t)_p,\quad 0<t \leq c(k).
\]
\end{lemma}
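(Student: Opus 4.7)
\emph{Plan.} The quantity $\omega_{\varphi}^{*k}(f^{(r)},t)_{\varphi^r,p}$ is the $\L_p$-sum of three pieces per \ineq{dtaveraged}: a central integral over $[-1+t^*,1-t^*]$ in $x$ and $(0,t]$ in $\tau$, and two one-sided endpoint integrals over $[1-At^*,1]$ (respectively $[-1,-1+At^*]$) in $x$ and $(0,t^*]$ in $u$, where $t^*=2k^2t^2$. My plan is to bound each of the three pieces by $c(k,r)\,\wkrav(f^{(r)},c(k)t)_p$, and then combine.

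For the central piece, I first observe that for every $\tau\in(0,t]$ and every $x\in[-1+t^*,1-t^*]$, one has $1-|x|\ge t^*\ge \mu(2k\tau)$, so that $x\in\Dom_{2k\tau}\subseteq\Dom_{k\tau}$. Property \prop{prop}{compar} then yields $\varphi(x)\le 2\wt_{k\tau}(x)$, whence
\[
\frac1t\int_0^t\!\!\int_{-1+t^*}^{1-t^*} |\varphi^r(x)\Delta^k_{\tau\varphi(x)}(f^{(r)},x)|^p\,dx\,d\tau \le 2^{rp}\,\wkrav(f^{(r)},t)_p^p .
\]

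The endpoint pieces are symmetric, so I treat only the right one. The key step is the change of variables $(u,x)\mapsto(\tau,\xi)$ given by $\xi:=x-ku/2$ and $\tau:=u/\varphi(\xi)$, with inverse $u=\tau\varphi(\xi)$, $x=\xi+k\tau\varphi(\xi)/2$. A direct computation shows that the Jacobian equals $\varphi(\xi)$, so $du\,dx=\varphi(\xi)\,d\tau\,d\xi$, while $\overleftarrow{\Delta}^k_u(f^{(r)},x)=\Delta^k_{\tau\varphi(\xi)}(f^{(r)},\xi)$ as a centered difference around~$\xi$; the conditions $\xi\pm k\tau\varphi(\xi)/2\in[-1,1]$ reduce to $x\le 1$ and to $\xi-ku/2\ge -1$ (trivial on the integration domain for small $t$), hence $\xi\in\Dom_{k\tau}$. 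Three comparisons then need to be checked on this new range, valid for $t$ small: (i) since $1-\xi\in[ku/2,(A+k/2)t^*]$ keeps $\xi\ge 1/2$, one has $\wt_{k\tau}^2(\xi)=(1-x)(1+\xi-ku/2)\sim 1-x\sim\varphi^2(x)$, so $\varphi^r(x)\le c(k,r)\wt^r_{k\tau}(\xi)$; (ii) $\varphi(\xi)\le \sqrt{2(1-\xi)}\le c(k,A)\,t$; (iii) $\varphi(\xi)\ge\sqrt{1-\xi}\ge\sqrt{ku/2}$ yields $\tau=u/\varphi(\xi)\le\sqrt{2u/k}\le 2\sqrt{k}\,t$.

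Combining these three ingredients,
\[
\int_0^{t^*}\!\!\int_{1-At^*}^{1}|\varphi^r(x)\overleftarrow{\Delta}^k_u(f^{(r)},x)|^p\,dx\,du \le c\,t\int_0^{2\sqrt{k}\,t}\!\!\int_{\Dom_{k\tau}}|\wt^r_{k\tau}(\xi)\Delta^k_{\tau\varphi(\xi)}(f^{(r)},\xi)|^p\,d\xi\,d\tau ,
\]
and dividing by $t^*=2k^2t^2$ and taking $p$-th roots dominates this piece by $c(k,r)\wkrav(f^{(r)},2\sqrt{k}\,t)_p$. The left boundary piece is handled identically via $\xi:=x+ku/2$. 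Adding the three bounds and invoking \ineq{ineqavermon} to absorb the mild mismatch between the time arguments yields the claim, with $c(k)=2\sqrt{k}$, say. The principal technical challenge is precisely the change of variables at the endpoint, where one must extract simultaneously from the relations among $u$, $\varphi$, $\wt_\delta$, and $\Dom_\delta$ the three facts that $\xi\in\Dom_{k\tau}$, that $\wt_{k\tau}(\xi)\sim\varphi(x)$, and that $\tau=O(t)$; once these are in place, the remainder is essentially bookkeeping.
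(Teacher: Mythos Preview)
Your proof is correct and follows essentially the same approach as the paper's. The central piece is handled identically, and for the endpoint piece the paper performs the shift $y=x\pm ku/2$ and the substitution $u=h\varphi(y)$ in two successive steps (with an intermediate order-swap), whereas you package them as a single change of variables $(u,x)\mapsto(\tau,\xi)$ with Jacobian $\varphi(\xi)$; the three key comparisons you isolate, namely $\varphi(x)\sim\wt_{k\tau}(\xi)$, $\varphi(\xi)\le c(k)t$, and $\tau\le 2\sqrt{k}\,t$, are exactly the estimates the paper uses.
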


\begin{proof}
We estimate each of the
three terms in the definition \ineq{dtaveraged} separately.

First, recall that $t^* := 2k^2 t^2$ and note that $[-1+t^*, 1-t^*]  \subset  \Dom_{2kt}$ and so using \prop{prop}{compar}
we have
\begin{eqnarray} \label{ineqaver}
\lefteqn{\frac1t\int_0^t\int_{-1+t^*}^{1-t^*}|\varphi^r(x)\Delta^k_{\tau\varphi(x)}
(f^{(r)},x)|^p\,dx\,d\tau}\\\nonumber&\leq &\frac{2^{rp}}{t}
\int_0^t\int_{-1+t^*}^{1-t^*}|\wt^r_{k\tau}(x)\Delta^k_{\tau
\varphi(x)}(f^{(r)},x)|^p \, dx\, d\tau \\ \nonumber&\leq&
\frac{2^{rp}}{t} \int_0^t \int _{\Dom_{2kt}} |\wt^r_{k\tau} (x)
\Delta^k_{\tau\varphi(x)}(f^{(r)},x)|^p\,dx\,d\tau\\\nonumber&\leq&
2^{rp}\wkrav(f^{(r)},t)_p^p\,.
\end{eqnarray}
We now estimate the second term (dealing with the function near $-1$), the third term being similar.

If $t$ is sufficiently small (for example, $t \leq (2k\sqrt{A+k/2})^{-1}$ will do),
then
\begin{eqnarray*}
\lefteqn{\frac1{t^*}\int_0^{t^*}\int_{-1}^{-1+At^*}|\varphi^r(x)\overrightarrow{\Delta}^k_{u}(f^{(r)},x)|^p\,dx\,du}\\
&=&\frac1{t^*}\int_0^{t^*}\int_{-1}^{-1+At^*}|\varphi^r(x)\Delta^k_{u}(f^{(r)},x+ku/2)|^p\,dx\,du\\
&\leq&\frac1{t^*}\int_0^{t^*}\int_{-1+ku/2}^{-1+(A+k/2)t^*}|\varphi^r(y-ku/2)\Delta^k_{u}(f^{(r)},y)|^p\,dy\,du\\
&\leq&\frac1{t^*}\int_{-1}^{-1+(A+k/2)t^*}\int_0^{2(y+1)/k}|\varphi^r(y-ku/2)\Delta^k_{u}(f^{(r)},y)|^p\,du\,dy\\
&=&\frac1{t^*}\int_{-1}^{-1+(A+k/2)t^*}\int_0^{2(y+1)/(k\varphi(y))}\varphi(y)|\varphi^r(y-kh\varphi(y)/2)
\Delta^k_{h\varphi(y)}(f^{(r)},y)|^p\,dh\,dy\\&\leq&c
\frac1{t^*}\int_{-1}^{-1+(A+k/2)t^*}
\int_0^{2(y+1)/(k\varphi(y))}\varphi(y)|\wt_{kh}^r(y)\Delta^k_{h\varphi(y)}(f^{(r)},y)|^p\,dh\,dy\\
&\leq&c\frac1{\sqrt{t^*}}\int_{-1}^{-1+(A+k/2)t^*}\int_0^{2(y+1)/(k\varphi(y))}|\wt_{kh}^r(y)
\Delta^k_{h\varphi(y)}(f^{(r)},y)|^p\,dh\,dy\\&\leq&c
\frac1{\sqrt{t^*}}\int_0^{c\sqrt{t^*}}\int_{\Dom_{kh}\cap[-1,-1+(A+k/2)t^*]}
|\wt_{kh}^r(y) \Delta^k_{h\varphi(y)}(f^{(r)},y)|^p \,dy\,dh\\
&\leq&c\wkrav(f^{(r)},c(k)t)_p^p,
\end{eqnarray*}
where, for the third inequality,   % we applied \ineq{wbig}.
we used the fact that $\varphi (y-kh\varphi(y)/2) \leq \sqrt{2} \wt_{kh} (y)$ if $0\leq h \leq 2(y+1)/(k\varphi(y))$ and $y\leq -1/2$.
\end{proof}

\begin{lemma} \label{auxlemmainfty}
Let   $k\in\N$, $r\in\N_0$  and $f\in\B_\infty^r$. Then
\[ \omega_{\varphi}^{k}(f^{(r)},t)_{\varphi^r,\infty} \leq c(k,r) \wkr(f^{(r)},
c(k)t)_\infty,\quad 0< t \leq c(k). \]
\end{lemma}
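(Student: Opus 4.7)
The plan is to mimic the structure of the proof of \lem{auxlemmanew}, estimating separately the three terms in the definition \ineq{dtweighted} of $\omega_\varphi^k(f^{(r)},t)_{\varphi^r,\infty}$: the ``central'' supremum over $[-1+t^*,1-t^*]$ and the two one-sided suprema on $[-1,-1+At^*]$ and $[1-At^*,1]$, where $t^*=2k^2t^2$. Throughout I will assume $t\leq c(k)$ small enough that $(A+k/2)t^*\leq 1$ and other threshold conditions below are met.

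For the central term, I will use that $[-1+t^*,1-t^*]\subset\Dom_{2kt}\subset\Dom_{kh}$ for $0<h\leq t$. Since the formula $\wt_\delta^2(x)=(1-\delta\varphi(x)/2)^2-x^2$ shows that $\wt_{kh}(x)$ is non-increasing in $h$, \prop{prop}{compar} applied at $\delta=kt$ yields $\varphi(x)\leq 2\wt_{kt}(x)\leq 2\wt_{kh}(x)$ for every such $h$ and every $x$ in the central interval. Multiplying the difference by $\varphi^r$ and passing to the sup gives the central term $\leq 2^r\wkr(f^{(r)},t)_\infty$.

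The main work is on the second (endpoint near $-1$) term; the third is completely analogous by symmetry. For $x\in[-1,-1+At^*]$ and $0<h\leq t^*$, set $y:=x+kh/2$ and $h':=h/\varphi(y)$, so that $\overrightarrow\Delta_h^k(f^{(r)},x)=\Delta_h^k(f^{(r)},y)=\Delta_{h'\varphi(y)}^k(f^{(r)},y)$. The three things to check are:
\begin{enumerate}[\rm(a)]
\item $y\in\Dom_{kh'}$: indeed $kh'\varphi(y)/2=kh/2\leq 1+y=1-|y|$ since $y\geq -1+kh/2$.
\item $h'\leq c(k)t$: since $1+y\geq kh/2$ and $1-y\geq 1$ (here I use $(A+k/2)t^*\leq 1$), one has $\varphi(y)\geq\sqrt{kh/2}$, whence $h'=h/\varphi(y)\leq\sqrt{2h/k}\leq\sqrt{2t^*/k}=2\sqrt{k}\,t$.
\item $\varphi(x)\leq c\wt_{kh'}(y)$: using the identity $\wt_{kh'}^2(y)=(1-y-kh/2)(1+y-kh/2)=(1-x-kh)(1+x)$ and $\varphi^2(x)=(1-x)(1+x)$, the ratio $\wt_{kh'}^2(y)/\varphi^2(x)=(1-x-kh)/(1-x)$ is bounded below by $1/2$, since $1-x\geq 1$ and $kh\leq kt^*\leq 1/2$ for $t$ small.
\end{enumerate}
Combining (a)--(c) gives
\[
|\varphi^r(x)\overrightarrow\Delta_h^k(f^{(r)},x)|\leq c(r)\,\wt_{kh'}^r(y)|\Delta_{h'\varphi(y)}^k(f^{(r)},y)|\leq c(r)\wkr(f^{(r)},2\sqrt{k}\,t)_\infty,
\]
uniformly in $x$ and $h$ in the required ranges. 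Summing the three pieces and absorbing constants into $c(k)$ yields the desired estimate.

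The only subtle step is (c), which is where the geometry of moving from a forward difference near the endpoint $-1$ to a symmetric $\varphi$-difference at the midpoint $y$ is used. The explicit product factorization of $\wt_\delta^2$ makes this essentially algebraic once one reparametrizes $(x,h)\mapsto(y,h')$, so I expect no further obstacles beyond carefully tracking the smallness assumption on $t$.
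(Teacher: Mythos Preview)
Your proposal is correct and follows essentially the same route as the paper's proof: both estimate the three pieces of \ineq{dtweighted} separately, handle the central piece via $[-1+t^*,1-t^*]\subset\Dom_{2kt}$ together with \prop{prop}{compar}, and treat the endpoint piece by the substitution $y=x+kh/2$, $h'=h/\varphi(y)$ to pass from the forward difference to a symmetric $\varphi$-difference at the midpoint. Your step~(c) is exactly the inequality the paper records as $\varphi(y-kh\varphi(y)/2)\le\sqrt{2}\,\wt_{kh}(y)$ (in its renamed variables); you simply make the algebra behind it explicit via the product factorization of $\wt_\delta^2$, and your bound $h'\le 2\sqrt{k}\,t$ is a slightly different but equivalent way of obtaining the paper's $h'\le c(k)t$.
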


\begin{proof}
The proof is very similar to that of \lem{auxlemmanew}.
First,   recalling that $t^* := 2k^2 t^2$, noting that $[-1+t^*, 1-t^*]  \subset  \Dom_{2kt}$ and   using \prop{prop}{compar}
we have
\[
  \sup_{0<h\leq t} \norm{\varphi^r(\cdot) \Delta_{h\varphi(\cdot)}^k (f^{(r)}, \cdot)}{\L_\infty[-1+t^*,1-t^*]} \leq 2^{r }\wkr(f^{(r)},t)_\infty .
\]

If $0<h\le t^*$, then
\begin{eqnarray*}
\lefteqn{ \left\| \varphi^r(\cdot)  \overrightarrow{\Delta}_{h}^k (f^{(r)}, \cdot) \right\|_{\L_\infty [-1,-1+At^*]}  }\\
   & =&
\sup_{x\in[-1,-1+At^*]} \left|\varphi^r(x) \Delta^k_h(f^{(r)},x+kh/2) \right| \\
&\le& \sup_{y\in[-1+kh/2,-1+(A+k/2) t^*]} \left| \varphi^r(y-kh/2)\Delta^k_h(f^{(r)},y)\right| .
 \end{eqnarray*}

Hence,
if $t$ is sufficiently small ($t \leq (2k\sqrt{A+k/2})^{-1}$ will do),
then
\begin{eqnarray*}
\lefteqn{ \sup_{0<h\leq t^*} \left\| \varphi^r(x)  \overrightarrow{\Delta}_{h}^k (f^{(r)}, x) \right\|_{\L_\infty [-1,-1+At^*]} }\\
& \leq & \sup_{y\in[-1,-1+(A+k/2) t^*]} \sup_{0<h\leq 2(y+1)/k} \left| \varphi^r(y-kh/2)\Delta^k_h(f^{(r)},y)\right| \\
& =  & \sup_{y\in[-1,-1+(A+k/2) t^*]} \sup_{0<h\leq 2(y+1)/(k\varphi(y)}  \left| \varphi^r(y-kh\varphi(y)/2)\Delta^k_{h\varphi(y)} (f^{(r)},y)  \right|  \\
&\le &
2^{r/2} \sup_{y\in[-1,-1+(A+k/2) t^*]} \sup_{0<h\leq 2(y+1)/(k\varphi(y)}  \left| \wt_{kh}^r(y)\Delta^k_{h\varphi(y)} (f^{(r)},y)  \right|  \\
&\le &
2^{r/2} \sup_{0<h\leq c(k)t } \sup_{y\in\Dom_{kh} \cap [-1,-1+(A+k/2) t^*]}   \left| \wt_{kh}^r(y)\Delta^k_{h\varphi(y)} (f^{(r)},y)  \right|  \\
& \leq & 2^{r/2} \wkr(f^{(r)},c(k) t)_\infty .
\end{eqnarray*}

 The estimate of $ \sup_{0<h\leq t^*}\norm{\varphi^r(\cdot) \overleftarrow{\Delta}_{h}^k (f^{(r)}, \cdot)}{\L_\infty[1-At^*,1]}$ is similar.
\end{proof}

We are now ready to complete the proof of the lower estimate in \thm{thm1.4}.
First, estimates \ineq{equivalence} and \ineq{dtupper} together with Lemmas~\ref{auxlemmanew} and \ref{auxlemmainfty} imply that, for $f\in \B^r_p$, $1\leq p \leq \infty$,
\be\label{inequ}
K_{k,r}^\varphi(f^{(r)},t^k)_{p} \leq c\wkrav(f^{(r)}, c_1  t)_p , \quad 0<t \leq c_2  ,
\ee
where $c_1=c_1(k)$ and $c_2=c_2(k)$ are some positive constants which we now consider fixed.

Now, suppose that $0<t\leq 2/k$ and let $\mu := \max\{1, c_1, 2/(kc_2) \}$. Then, since $t/\mu \leq c_2$, taking into
account \ineq{ineqavermon}, we have
 \begin{eqnarray*}
K_{k,r}^\varphi(f^{(r)},t^k)_{p} & \leq & \mu^k K_{k,r}^\varphi(f^{(r)},(t/\mu)^k)_{p} \leq c  \wkrav(f^{(r)}, c_1  t/\mu)_p \\
& \leq & c (\mu/c_1)^{1/p} \wkrav(f^{(r)}, t)_p ,
\end{eqnarray*}
which completes the proof of the lower estimate in \thm{thm1.4}.

\sect{Hierarchy between  moduli}

The following theorem  illustrates the hierarchy between the
moduli of smoothness.
\begin{theorem}\label{hierarchy}
If $f\in\B^{r+1}_p$, $1\le p \le \infty$, $r\in\N_0$ and $k\ge2$, then
\[
\omega_{k,r}^\varphi(f^{(r)},t)_p \leq
ct\w_{k-1,r+1}^\varphi(f^{(r+1)},t)_p .
\]
\end{theorem}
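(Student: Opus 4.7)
The plan is to derive the inequality from the pointwise integral representation
$$\Delta_{h\varphi(x)}^k(f^{(r)},x) = \int_{-h\varphi(x)/2}^{h\varphi(x)/2} \Delta_{h\varphi(x)}^{k-1}(f^{(r+1)}, x+u)\,du,\qquad x\in\Dom_{kh},$$
obtained by applying $\Delta_{h\varphi(x)}(g,y) = \int_{-h\varphi(x)/2}^{h\varphi(x)/2}g'(y+u)\,du$ to one factor of $\Delta_{h\varphi(x)}^k = \Delta_{h\varphi(x)}^{k-1}\circ\Delta_{h\varphi(x)}$ and interchanging the sum and the integral. The outer integration length $h\varphi(x)$ is exactly what will produce the extra factor of $t$ on the right-hand side. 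For each $u$ with $|u|\le h\varphi(x)/2$, setting $y:=x+u$ and $h^*:=h\varphi(x)/\varphi(y)$ rewrites the inner difference as $\Delta_{h^*\varphi(y)}^{k-1}(f^{(r+1)},y)$, matching the form appearing in $\omega_{k-1,r+1}^\varphi$; one checks directly that $y\in\Dom_{(k-1)h^*}$ whenever $x\in\Dom_{kh}$.

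For $p=\infty$, I would bound the integral by its length times the supremum of the integrand and then establish the two estimates
$$\wt_{kh}^r(x)\varphi(x) \le c\,\wt_{(k-1)h^*}^{r+1}(y)\quad\text{and}\quad h^*\le ch.$$
On the bulk region $\Dom_{2kh}$, \ineq{phi} gives $\varphi(x)\sim\varphi(y)$ (hence $h^*\sim h$), while \prop{prop}{compare} together with \prop{prop}{compar} yield $\wt_{kh}(x)\sim\wt_{(k-1)h^*}(y)\sim\varphi(x)\sim\varphi(y)$, so both estimates reduce to $\varphi^{r+1}(x)\sim\varphi^{r+1}(y)$. Combining these, the integrand is bounded pointwise by $c\,h\,\omega_{k-1,r+1}^\varphi(f^{(r+1)},ch)_\infty$, and taking the sup in $h\in(0,t]$ followed by \cor{maincorol} finishes the $L_\infty$ case.

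For $1\le p<\infty$, I would apply Hölder's inequality to the outer integral to obtain
$$|\Delta_{h\varphi(x)}^k(f^{(r)},x)|^p\le (h\varphi(x))^{p-1}\int_{-h\varphi(x)/2}^{h\varphi(x)/2}|\Delta_{h\varphi(x)}^{k-1}(f^{(r+1)},x+u)|^p\,du,$$
multiply by $\wt_{kh}^{rp}(x)$, integrate over $\Dom_{kh}$, and swap the order by Fubini. After rescaling $u = vh\varphi(x)/2$ with $v\in[-1,1]$, the change of variable $y = x + vh\varphi(x)/2$ is quasi-isometric on $\Dom_{kh}$ by \prop{prop}{3.1}, and the same bulk weight comparison as above reduces the integrand to $|\wt_{(k-1)h^*}^{r+1}(y)\Delta_{h^*\varphi(y)}^{k-1}(f^{(r+1)},y)|^p$ with $h^*\sim h$. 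Recognizing this as a constant multiple of the integrand of the averaged modulus $\wkrav(f^{(r+1)},ch)_p$ and invoking \thm{thm1.4} together with \cor{maincorol} yields the desired bound $\omega_{k,r}^\varphi(f^{(r)},t)_p \le c\,t\,\omega_{k-1,r+1}^\varphi(f^{(r+1)},t)_p$.

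The main obstacle is the boundary strip $\Dom_{kh}\setminus\Dom_{2kh}$, where $\wt_{kh}$ and $\varphi$ are no longer equivalent and the clean bulk comparisons break down. In complete analogy with the $I_1/I_2/I_3$ decomposition used in the proof of \lem{less}, a separate estimate is required there, relying on \prop{prop}{useful} (which still gives $\wt_{kh}(x)\le\varphi(y)$) and the smallness of the strip to absorb the loss. The bookkeeping around the step renormalization $h\to h^*$ inside the $L_p$ change of variable is technical but not conceptually deep.
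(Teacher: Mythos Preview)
Your approach is genuinely different from the paper's. The paper does not argue directly from the integral identity at all; instead it invokes two inequalities from Ditzian--Totik, namely \cite[(6.2.9)]{dt} and \cite[(6.3.2)]{dt}, to obtain
\[
\omega_\varphi^k(f^{(r)},t)_{\varphi^r,p}\le c\int_0^t\tau^{-1}\Omega_\varphi^k(f^{(r)},\tau)_{\varphi^r,p}\,d\tau
\le ct\,\Omega_\varphi^{k-1}(f^{(r+1)},t)_{\varphi^{r+1},p}\le ct\,\omega_\varphi^{k-1}(f^{(r+1)},t)_{\varphi^{r+1},p},
\]
and then transfers this to $\omega_{k,r}^\varphi$ via the equivalence in Remark~\ref{remequivdt}. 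So the paper's proof is essentially a two-line reduction to known weighted DT estimates, whereas you attempt a self-contained pointwise argument.

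There is, however, a real gap in your boundary treatment for $p=\infty$. Your plan there is to bound the $u$-integral by its length times the supremum, which forces you to need the pointwise inequality $\wt_{kh}^r(x)\varphi(x)\le c\,\wt_{(k-1)h^*}^{r+1}(y)$. On the strip $\Dom_{kh}\setminus\Dom_{2kh}$ this fails: take $r=0$, $x$ close to $1-\mu(kh)$ and $u$ close to $h\varphi(x)/2$; then $\wt_{(k-1)h^*}(y)\to 0$ while $\varphi(x)\sim kh$ stays bounded away from zero. Neither of the tools you propose rescues this: \prop{prop}{useful} only gives $\wt_{kh}(x)\le\varphi(y)$, which is the wrong target weight, and ``smallness of the strip'' is irrelevant in $L_\infty$ since you are taking a supremum. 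The estimate that actually works on the strip is not length-times-sup but rather
\[
\wt_{kh}^r(x)\int_{-h\varphi(x)/2}^{h\varphi(x)/2}\bigl|\Delta^{k-1}\bigr|\,du
\le \omega_{k-1,r+1}^\varphi(f^{(r+1)},ch)_\infty\int_{-h\varphi(x)/2}^{h\varphi(x)/2}\wt_{(k-1)h^*}^{-1}(y)\,du,
\]
using the universal bound $\wt_{kh}(x)\le\wt_{(k-1)h^*}(y)$ (which follows directly from the factored form of $\wt$), and then checking that the last integral is $O(h)$; this is an explicit computation of the type $\int(a-u)^{-1/2}du$. A similar refinement is needed for $1\le p<\infty$: your claim that the transformed integrand is ``a constant multiple of the integrand of the averaged modulus'' glosses over the fact that $h^*=h\varphi(x)/\varphi(y)$ varies with $y$ after the change of variable, so the result is not literally an evaluation of $\omega_{k-1,r+1}^{*\varphi}$ at a fixed step.
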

\begin{proof}
By virtue of \cite[(6.2.9)]{dt}, we have
\[
%\omega_{k,r}^\varphi(f^{(r)},t)_p\le
 \omega_\varphi^k(f^{(r)},t)_{\varphi^r,p}\le
c\int_0^t(\Omega^k_\varphi(f^{(r)},\tau)_{\varphi^r,p}\,/\tau)\,d\tau,
\]
where $\Omega^k_\varphi$ was defined in \cite[(8.1.2)]{dt} as follows
\[  %\label{main}
\Omega^k_\varphi(f,t)_{\varphi^r,p}:=\sup_{0<h\leq t}
\norm{\varphi^r \Delta_{h\varphi}^kf}{\Lp[-1+2k^2h^2 , -1+2k^2h^2]}.
\]

Also, by
\cite[(6.3.2)]{dt}, we obtain
\[
\Omega^k_\varphi(f^{(r)},t)_{\varphi^r,p}\le
ct\Omega^{k-1}_\varphi(f^{(r+1)},t)_{\varphi^{r+1},p}.
\]
Hence,
\begin{align*}
%\omega_{k,r}^\varphi(f^{(r)},t)_p
 \omega_\varphi^k(f^{(r)},t)_{\varphi^r,p}
&\le
c\int_0^t\Omega^{k-1}_\varphi(f^{(r+1)},\tau)_{\varphi^{r+1},p}\,d\tau\\
&\le
ct\Omega^{k-1}_\varphi(f^{(r+1)},t)_{\varphi^{r+1},p}\le
ct \omega^{k-1}_\varphi
(f^{(r+1)},t)_{\varphi^{r+1},p},
\end{align*}
where for the second inequality we used the monotonicity of
$\Omega^{k-1}_\varphi(f^{(r+1)},t)_{\varphi^{r+1},p}$, and for the
third we applied \cite[(6.2.9)]{dt}.

In view of the equivalence between
our and weighted DT moduli
(see Remark~\ref{remequivdt}),
our proof is
complete.
\end{proof}

%\bc
We also have the other usual hierarchy which follows by \cite[Theorem 6.1.4]{dt}.

\begin{theorem}\label{hierarchy2}
If $f\in\B^r_p$, $1\le p\le\infty$, $r\in\N_0$ and $k\ge2$, then
\[
\omega_{k,r}^\varphi(f^{(r)},t)_p \leq
c \w_{k-1,r}^\varphi(f^{(r)},t)_p .
\]
\end{theorem}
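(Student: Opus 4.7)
The plan is to reduce the statement about our moduli $\omega_{k,r}^\varphi$ to the analogous statement about the classical weighted Ditzian--Totik moduli $\omega_\varphi^k(\,\cdot\,,t)_{\varphi^r,p}$, and then simply quote the corresponding result from \cite{dt}.

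First, by Remark \ref{remequivdt} (which is itself an immediate consequence of Theorem \ref{thm1.4} and the equivalence \eqref{equivalence} between the weighted $K$-functional and the weighted DT modulus), we have the two-sided equivalence
\[
\wkr(f^{(r)},t)_p \;\sim\; \omega_\varphi^k(f^{(r)},t)_{\varphi^r,p},
\]
valid for $f\in\B_p^r$, $1\le p\le\infty$ and $0<t\le t_0$; the analogous equivalence holds with $k$ replaced by $k-1$, using that $f\in\B_p^r$ is the natural class for both $\omega_{k,r}^\varphi$ and $\omega_{k-1,r}^\varphi$. Thus it suffices to prove the hierarchy
\[
\omega_\varphi^k(f^{(r)},t)_{\varphi^r,p} \;\le\; c\,\omega_\varphi^{k-1}(f^{(r)},t)_{\varphi^r,p}
\]
for the weighted DT moduli, with a constant $c$ depending only on $k$, $r$ and $p$.

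For the latter, the weight $w=\varphi^r$ and step-weight $\psi=\varphi$ satisfy the hypotheses needed in \cite[Theorem 6.1.4]{dt}, which asserts precisely this monotonicity in $k$ for weighted DT moduli under the required admissibility conditions on $w$ and $\psi$. Invoking that theorem yields the bound above, and transferring back through the equivalence of Remark \ref{remequivdt} finishes the proof. (For $t$ exceeding the admissible range $t_0$, one uses \eqref{larget} together with the monotonicity in $t$ of the right-hand side, exactly as in the proof of Corollary \ref{maincorol}.)

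I do not anticipate a genuine obstacle here: the only mildly delicate point is ensuring that the two-sided equivalence with the weighted DT modulus from Remark \ref{remequivdt} is applied in the correct $t$-range and for both $k$ and $k-1$. Since both equivalences hold uniformly on $0<t\le t_0$ with constants depending only on $k$, $r$ and $p$, and since $\omega_{k-1,r}^\varphi(f^{(r)},t)_p$ is non-decreasing in $t$, the restriction to small $t$ can be removed by the same $\tilde t:=\min\{t,2/(k-1)\}$ trick used in the proof of Corollary \ref{maincorol}.
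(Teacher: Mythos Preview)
Your proposal is correct and follows essentially the same approach as the paper: the paper simply states that the result ``follows by \cite[Theorem 6.1.4]{dt}'' without further detail, and your argument spells out the implicit use of the equivalence in Remark~\ref{remequivdt} to transfer between $\omega_{k,r}^\varphi$ and the weighted DT modulus $\omega_\varphi^k(\cdot,t)_{\varphi^r,p}$ before and after invoking that theorem.
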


% \ec
\sect{Polynomial approximation: direct results\label{sec5}}

This section is devoted to the approximation of
functions $f\in L_p[-1,1]$, $1\le p\le\infty$, by polynomials of degree $<n$. Let
$\mathcal P_n$ be the set of polynomials of degree $<n$ and denote
by
\[
E_n(f)_p=\inf_{p_n\in\mathcal P_n}\|f-p_n\|_p,
\]
the degree of approximation of $f\in L_p[-1,1]$ by elements of
$\mathcal P_n$.

An immediate application of Theorem \ref{hierarchy}, together with
\cite[Theorem 7.2.1]{dt}, is the following.

\begin{theorem}\label{estima} If $f\in\B^r_p$, $1\le p\le\infty$, then
\be \label{estimate}
E_n(f)_p\le\frac c{n^r}\wkr(f^{(r)},1/n)_p,\quad n\ge k+r.
\ee
\end{theorem}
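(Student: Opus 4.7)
The plan is to reduce the statement to the classical Jackson-type direct estimate for the unweighted DT modulus $\omega_m^\varphi(\cdot,t)_p$ by iterating the hierarchy inequality of Theorem \ref{hierarchy} exactly $r$ times. Concretely, I would establish the chain
\[
\omega_{k+r}^\varphi(f,t)_p = \omega_{k+r,0}^\varphi(f,t)_p \le ct\,\omega_{k+r-1,1}^\varphi(f',t)_p \le \cdots \le ct^r\,\omega_{k,r}^\varphi(f^{(r)},t)_p,
\]
and then invoke \cite[Theorem~7.2.1]{dt} with the DT modulus of order $m=k+r$, which yields $E_n(f)_p \le c\,\omega_{k+r}^\varphi(f,1/n)_p$ for $n\ge k+r$. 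Setting $t=1/n$ and combining the two bounds produces the claimed estimate $E_n(f)_p \le c n^{-r}\,\omega_{k,r}^\varphi(f^{(r)},1/n)_p$.

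A few bookkeeping points need to be verified as I go. First, so that $E_n(f)_p$ is even defined, I need $f\in \Lp[-1,1]=\B^0_p$; this follows by iterating Corollary \ref{corsec2} to descend from $\B^r_p$ through $\B^{r-1}_p,\ldots$ down to $\B^0_p$. Second, the $j$-th step of the telescoping (for $j=0,1,\ldots,r-1$) applies Theorem \ref{hierarchy} with parameters $(k',r')=(k+r-j,j)$, which requires $k'\ge 2$ and $f\in \B^{j+1}_p$. The first condition is automatic since $k'\ge k+1\ge 2$, and the second follows from the hypothesis $f\in\B^r_p$ together with (iterated) Corollary \ref{corsec2}. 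The case $r=0$ is already the content of \cite[Theorem~7.2.1]{dt} and needs no iteration.

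There is no real obstacle here: the heavy lifting was done in Theorem \ref{hierarchy} (which rests on the equivalence between our moduli and the weighted DT moduli proved in Theorem \ref{thm1.4}) and in the DT direct theorem, and the remainder is just a clean telescoping that accumulates the factor $t^r$. The only thing to watch is that the constant $c$ is absorbed consistently through the $r$ iterations, but since $k$ and $r$ are fixed throughout, this presents no difficulty under our convention about constants.
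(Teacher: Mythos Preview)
Your proof is correct and follows essentially the same approach as the paper: iterate Theorem~\ref{hierarchy} $r$ times to pass from $\omega_{k+r}^\varphi(f,1/n)_p$ to $(1/n)^r\,\omega_{k,r}^\varphi(f^{(r)},1/n)_p$, then invoke \cite[Theorem~7.2.1]{dt}. Your additional bookkeeping (using Corollary~\ref{corsec2} to ensure $f\in\B^{j}_p$ at each stage and checking $k'\ge 2$) is a welcome elaboration that the paper leaves implicit.
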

\begin{proof}
It follows from \cite[Theorem 7.2.1]{dt} that
\[
E_n(f)_p\le c\omega_{k+r}^\varphi(f,1/n)_p,\quad n\ge k+r.
\]
Since $f\in\B^r_p$, we apply Theorem \ref{hierarchy} $r$
times and \eqref{estimate} follows.
\end{proof}

An immediate consequence of \thm{estima} is the following direct estimate.

\begin{corollary} \label{corestim}
If $f\in\B^r_p$, $r\in\N_0$, $1\le p\le\infty$, and if for some $k\in\N$,
and $\alpha>r$, $\wkr(f^{(r)},t)_p=O(t^{\alpha-r})$, then
\be\label{estim}
E_n(f)_p\le cn^{-\alpha},\quad n\ge k+r.
\ee
\end{corollary}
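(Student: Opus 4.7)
The plan is to apply \thm{estima} directly and feed the hypothesis on the modulus into it. Since $f\in\B^r_p$, \thm{estima} gives
\[
E_n(f)_p \le \frac{c}{n^r}\, \wkr(f^{(r)}, 1/n)_p, \qquad n \ge k+r,
\]
with $c = c(k,r,p)$. Under the assumption $\wkr(f^{(r)}, t)_p = O(t^{\alpha-r})$, specializing $t = 1/n$ yields a constant $c' = c'(f,k,r,p,\alpha)$ such that $\wkr(f^{(r)}, 1/n)_p \le c'\, n^{-(\alpha-r)}$. Substituting this back produces
\[
E_n(f)_p \le \frac{c}{n^r}\cdot c'\, n^{-(\alpha-r)} = c''\, n^{-\alpha}, \qquad n\ge k+r,
\]
which is exactly \ineq{estim}.

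There is essentially no obstacle here: the corollary is a one-line consequence of \thm{estima}. The only thing worth remarking on is the role of the assumption $\alpha > r$. It is not needed for the algebraic manipulation above, but it is the natural compatibility condition that keeps the statement non-vacuous, since the exponent $\alpha - r$ on the modulus must be positive for the $O$-hypothesis to imply that $\wkr(f^{(r)},t)_p \to 0$ as $t\to 0^+$ (compare \lem{lem1}). No additional ingredients, no truncation, and no $K$-functional argument are required beyond invoking \thm{estima}.
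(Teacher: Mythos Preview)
Your argument is correct and matches the paper's intent exactly: the corollary is stated there as ``an immediate consequence of \thm{estima}'' with no further proof given. Your one-line substitution of the hypothesis into \ineq{estimate} is precisely what is meant.
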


It is interesting to compare \ineq{estimate} with estimates of how well $P_n^{(r)}$ approximates $f^{(r)}$.
Our result here is the following.

\begin{theorem} If $f\in\B^r_p$, $r\in\N$, $1\le p\le\infty$, and $P_n$ denotes the
polynomial of best approximation of $f$ in $L_p[-1,1]$, of degree $<n$. If $\int_0^1\bigl(\wkr(f^{(r)},\tau)_p/\tau\bigr)d\tau<\infty$, for some $k\in\N$, then
\[
\|(f^{(r)}-P_n^{(r)})\varphi^r\|_p\le c\int_0^{1/n}\bigl(\wkr(f^{(r)},\tau)_p/\tau\bigr)d\tau.
\]
\end{theorem}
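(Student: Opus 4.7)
\medskip\noindent\textbf{Proof plan.} This is a classical telescoping-sum inverse-type estimate where the derivatives of consecutive best-approximation polynomials are controlled by a weighted Bernstein inequality. Set $n_j := 2^j n$ for $j = 0, 1, 2, \ldots$ (so $n_0 = n$), and let $P_{n_j}$ be the polynomial of best approximation of $f$ from $\mathcal{P}_{n_j}$ in $L_p$. The idea is to write, formally,
\[
f^{(r)} - P_n^{(r)} = \sum_{j=0}^{\infty} \bigl(P_{n_{j+1}} - P_{n_j}\bigr)^{(r)},
\]
bound each term in the weighted norm $\|\varphi^r \cdot\|_p$, and then compare the resulting series to the integral on the right.

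\medskip\noindent\textbf{Step 1: bound each telescoping term.} For the polynomial $Q_j := P_{n_{j+1}} - P_{n_j} \in \mathcal{P}_{n_{j+1}}$, the weighted Bernstein inequality of Ditzian--Totik,
\[
\|\varphi^r Q_j^{(r)}\|_p \le c\, n_{j+1}^r\, \|Q_j\|_p,
\]
combined with the triangle inequality $\|Q_j\|_p \le E_{n_j}(f)_p + E_{n_{j+1}}(f)_p \le 2 E_{n_j}(f)_p$ and \thm{estima} applied to $f \in \B^r_p$ (provided $n_j \ge k+r$), gives
\[
\|\varphi^r Q_j^{(r)}\|_p \le c\, n_{j+1}^r \cdot \frac{c}{n_j^r}\, \wkr(f^{(r)}, 1/n_j)_p \le c\,\wkr(f^{(r)}, 1/n_j)_p.
\]

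\medskip\noindent\textbf{Step 2: convergence of the telescoping sum.} The finiteness of the integral $\int_0^1 \wkr(f^{(r)},\tau)_p/\tau\,d\tau$ will imply, via Step~3, that $\sum_j \wkr(f^{(r)},1/n_j)_p < \infty$, so the partial sums $P_{n_J}^{(r)}$ form a Cauchy sequence in the weighted $L_p$ space with weight $\varphi^r$. Since $P_{n_J} \to f$ in $L_p[-1,1]$ (whence $P_{n_J}^{(r)} \to f^{(r)}$ in the distributional sense on $(-1,1)$, using that $f^{(r-1)}\in AC_\loc$), the limit of $\{P_{n_J}^{(r)}\}$ in the weighted norm must agree a.e.\ with $f^{(r)}$ on $(-1,1)$. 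Therefore
\[
\|\varphi^r(f^{(r)} - P_n^{(r)})\|_p \le \sum_{j=0}^{\infty} \|\varphi^r Q_j^{(r)}\|_p \le c \sum_{j=0}^{\infty} \wkr(f^{(r)}, 1/n_j)_p.
\]

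\medskip\noindent\textbf{Step 3: sum to integral.} By monotonicity of $\wkr(f^{(r)},\cdot)_p$ in $t$, for $\tau\in[1/n_{j+1}, 1/n_j]$ we have $\wkr(f^{(r)},\tau)_p \ge \wkr(f^{(r)},1/n_{j+1})_p$, so
\[
\wkr(f^{(r)}, 1/n_{j+1})_p \le \frac{1}{\ln 2} \int_{1/n_{j+1}}^{1/n_j} \frac{\wkr(f^{(r)},\tau)_p}{\tau}\,d\tau.
\]
Summing over $j \ge 0$ and telescoping, the intervals $[1/n_{j+1}, 1/n_j]$ partition $(0, 1/n]$, giving
\[
\sum_{j=1}^{\infty} \wkr(f^{(r)}, 1/n_j)_p \le c \int_0^{1/n} \frac{\wkr(f^{(r)},\tau)_p}{\tau}\,d\tau.
\]
For the leftover $j=0$ term $\wkr(f^{(r)}, 1/n)_p$, apply \cor{maincorol} with $\lambda=2$ to get $\wkr(f^{(r)}, 1/n)_p \le c\, \wkr(f^{(r)}, 1/(2n))_p$, and then the argument above (applied to the single interval $[1/(2n), 1/n]$) shows
\[
\wkr(f^{(r)}, 1/(2n))_p \le \frac{1}{\ln 2}\int_{1/(2n)}^{1/n}\frac{\wkr(f^{(r)},\tau)_p}{\tau}\,d\tau \le c \int_0^{1/n}\frac{\wkr(f^{(r)},\tau)_p}{\tau}\,d\tau.
\]
Combining all pieces completes the proof.

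\medskip\noindent\textbf{Main obstacle.} Step~1 requires invoking the weighted Bernstein inequality $\|\varphi^r P^{(r)}\|_p \le c n^r \|P\|_p$ for $P\in\mathcal{P}_n$; this is classical (Ditzian--Totik) and causes no real difficulty. The genuinely delicate point is the identification of the limit in Step~2: we must argue that the $L_p(\varphi^r)$-Cauchy sequence $\{P_{n_J}^{(r)}\}$ actually has $f^{(r)}$ as its limit, rather than some a priori different function. This is handled by noting that on any compact subinterval of $(-1,1)$ the weight $\varphi^r$ is bounded above and below, so convergence there is in ordinary $L_p$, and combining with $P_{n_J}\to f$ in $L_p[-1,1]$ forces the distributional $r$-th derivative of the limit to coincide with $f^{(r)}$.
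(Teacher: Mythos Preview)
Your proof is correct and follows essentially the same approach as the paper: telescoping along the dyadic sequence $n_j=2^jn$, applying the weighted Bernstein (Potapov) inequality $\|\varphi^r Q^{(r)}\|_p\le cn^r\|Q\|_p$ to each block, invoking \thm{estima} and \cor{maincorol} to control $\|Q_j\|_p$ by the modulus, and then comparing the resulting sum to the integral by monotonicity. The paper is slightly terser about the limit identification in Step~2 (it simply cites the argument in the proof of \thm{Theorem 3.199}), but your justification via compact subintervals is equivalent and correct.
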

\begin{proof}
Using Potapov's estimate (see \eg \cite[(7.2.7)]{dt})
\be \label{potap}
\|\varphi^\nu P_n^{(\nu)} \|_p \leq c(p,\nu)   n^\nu \|P_n\|_p ,
\ee
we have
\begin{eqnarray*}
\|(f^{(r)}-P_n^{(r)})\varphi^r\|_p&\le&
\sum_{j=1}^\infty\|(P_{2^jn}^{(r)}-P_{2^{j-1}n}^{(r)})\varphi^r\|_p\\
&\le& c \sum_{j=1}^\infty2^{jr}n^r\|P_{2^jn}-P_{2^{j-1}n}\|_p\\
&\le& c\sum_{j=1}^\infty2^{jr}n^r\left(\|P_{2^jn}-f\|_p+
\|f-P_{2^{j-1}n}\|_p\right)\\
&\le&c  \sum_{j=1}^\infty\wkr(f^{(r)},1/(2^{j}n))_p\\
&\le & c \sum_{j=1}^\infty
\int_{1/(2^{j}n)}^{1/(2^{j-1}n)} \bigl(\wkr(f^{(r)},\tau)_p/\tau\bigr)d\tau \\
&\le&c  \int_0^{1/n}
\bigl(\wkr(f^{(r)},\tau)_p/\tau\bigr)d\tau,
\end{eqnarray*}
where for the first inequality we used the fact that $\|(f^{(r)}-P_{2^jn}^{(r)})\varphi^r\|_p\to0$ (see details in the proof of \thm{Theorem 3.199} below),
%for the second inequality we applied \cite[(7.2.7)]{dt},
and for the fourth inequality we used \thm{estima} and \cor{maincorol}.
\end{proof}

\sect{Polynomial approximation:  inverse theorems\label{sec8}}

Denote by $\Phi$  the set of nondecreasing functions
$\phi:[0,1]\rightarrow[0,\infty)$, satisfying $\phi(0+)=0$.
Recalling that $E_n(f)_p$ is the degree of approximation of $f$ by polynomials of degree $<n$, we have the following inverse theorem.

\begin{theorem} \label{Theorem 3.199}
Given $1\leq p \leq \infty$, $k\in\N$, $r\in\N_0$, $N\in\N$, and $\phi\in\Phi$ such that
$$
 \int_0^1 \frac{r \phi (u)}{u^{r +1}}du<+\infty
$$
(\ie  if $r=0$, this condition is not needed).
If
$$
E_n(f)_p\le\phi\left(\frac1n\right),\quad \mbox{for all} \quad n\ge N,
$$
then one of the representatives of $f$ has a locally absolutely continuous derivative $f^{(r-1)}$,
$f\in  \B^r_p$, and
\[
\omega_{k,r}^\varphi( f^{(r)},t)_p\le
c\int_0^t\frac{r\phi(u)}{u^{r+1}}du+ct^{k}\int_t^1\frac{\phi(u)}{u^{k+r+1}}du
+c(N)t^{k}E_{k+r}(f)_p,\quad t\in\left[0,1/2\right].
\]
If, in addition, $N\le k+r$, then
\[
\omega_{k,r}^\varphi(f^{(r)},t)_p\le
c \int_0^t\frac{r\phi(u)}{u^{r+1}}du+ct^k\int_t^1\frac{\phi(u)}{u^{k+r+1}}du,\quad t\in\left[0,1/2\right].
\]
\end{theorem}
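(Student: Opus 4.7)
Let $P_n\in\mathcal P_n$ denote a polynomial of best approximation of $f$ of degree $<n$, and set $m_i:=2^i(k+r)$. Since $P_{k+r}^{(k+r)}\equiv 0$, telescoping gives $P_n^{(s)}=P_{k+r}^{(s)}+\sum_{i=0}^{I-1}(P_{m_{i+1}}^{(s)}-P_{m_i}^{(s)})$ for $n=m_I$. Applied to this polynomial difference of degree $<m_{i+1}$, Potapov's inequality \ineq{potap} yields
\[
\|\varphi^s(P_{m_{i+1}}^{(s)}-P_{m_i}^{(s)})\|_p\le c\,m_{i+1}^s\|P_{m_{i+1}}-P_{m_i}\|_p,
\]
with $\|P_{m_{i+1}}-P_{m_i}\|_p\le 2E_{m_i}(f)_p\le 2\phi(1/m_i)$ as soon as $m_i\ge N$, and $\le 2E_{k+r}(f)_p$ for the finitely many (none if $N\le k+r$) initial indices with $m_i<N$. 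The strategy is to use $s=r$ to establish that $f\in\B^r_p$ and bound $\|\varphi^r(f^{(r)}-P_n^{(r)})\|_p$, use $s=k+r$ to bound $\|\varphi^{k+r}P_n^{(k+r)}\|_p$, plug $g:=P_n$ into the $K$-functional of Definition~\ref{k-func} with $n\sim 1/t$, and invoke \thm{thm1.4}.

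\medskip\noindent
\textbf{Step 1: $f\in\B^r_p$.} For $r\ge 1$, monotonicity of $\phi$ and a dyadic comparison give
\[
\sum_{i\ge I}m_{i+1}^{r}\phi(1/m_i)\le c\int_0^{c/m_I}\frac{\phi(u)}{u^{r+1}}\,du,
\]
which is finite by hypothesis, hence $\{\varphi^r P_{m_i}^{(r)}\}$ is Cauchy in $L_p$, converging to some $g\in L_p$. A standard argument---positivity of $\varphi^r$ on $(-1,1)$ yields $L_p^{loc}(-1,1)$-convergence of $P_{m_i}^{(r)}$, and $P_{m_i}\to f$ in $L_p$ forces the limits of the lower derivatives to match---identifies $g$ with $\varphi^r f^{(r)}$ for a representative of $f$ satisfying $f^{(r-1)}\in AC_{loc}(-1,1)$, so $f\in\B^r_p$ and
\[
\|\varphi^r(f^{(r)}-P_n^{(r)})\|_p\le c\int_0^{c/n}\frac{r\phi(u)}{u^{r+1}}\,du,\qquad n\ge k+r.
\]
For $r=0$ the bound reduces to $\|f-P_n\|_p\le\phi(1/n)$, which will be absorbed into the second integral of Step~2.

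\medskip\noindent
\textbf{Step 2: the $K$-functional.} Fix $t\in(0,1/2]$ and pick $n=m_I\ge k+r$ with $2/t\le n<4/t$, so that $1/m_{I-1}=2/n\le t$; use $g:=P_n\in\B^{k+r}_p$ in Definition~\ref{k-func}. The analogous dyadic comparison yields
\[
\sum_{\substack{0\le i<I\\ m_i\ge N}}m_{i+1}^{k+r}\phi(1/m_i)\le c\int_t^{1}\frac{\phi(u)}{u^{k+r+1}}\,du,
\]
and the $O(\log N)$ initial terms with $m_i<N$ (vacuous if $N\le k+r$) contribute at most $c(N)E_{k+r}(f)_p$. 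Combined with Step~1,
\[
K^\varphi_{k,r}(f^{(r)},t^k)_p\le c\int_0^t\frac{r\phi(u)}{u^{r+1}}\,du+ct^k\int_t^1\frac{\phi(u)}{u^{k+r+1}}\,du+c(N)t^k E_{k+r}(f)_p,
\]
and \thm{thm1.4} converts this into the announced estimate for $\omega^\varphi_{k,r}(f^{(r)},t)_p$.

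\medskip\noindent
The delicate point is Step~1: the weighted Cauchy property produces only an abstract $L_p$-limit, which must be legitimately identified with the weighted $r$-th derivative of an absolutely continuous representative of $f$. Once this is in place, the rest is a routine combination of Potapov's inequality, dyadic summation, and the $K$-functional equivalence from \thm{thm1.4}.
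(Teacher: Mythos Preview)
Your proposal is correct and follows essentially the same route as the paper: telescoping via dyadic best-approximation polynomials, Potapov's inequality \ineq{potap} for the weighted derivative norms, dyadic summation compared with the two integrals, and a Cauchy-sequence argument to show $f\in\B^r_p$. The only packaging difference is that you insert $g=P_n$ into the $K$-functional and invoke \thm{thm1.4} at the end, whereas the paper bounds $\wkr$ of each block $Q_j$ directly (which amounts to using the easy direction of \thm{thm1.4}, namely \lem{less}); the substance is identical.
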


\begin{remark}
For $p=\infty$, this theorem was proved in \cite{kls3}.
\end{remark}

\begin{remark}
A  classical restatement of \thm{Theorem 3.199} is \thm{primetheorem} below. We prefer the current (integral) version since we find it rather more convenient to use.
\end{remark}

\begin{proof}
Since the theorem was proved in \cite{kls3} for $p=\infty$, we may assume that $1\leq p <\infty$.

 We first give the
proof for the case $r\ge 1$. Without any loss of generality assume
that $N\ge k+r$. Set $m_j:=N2^j$ and $\phi_j:=\phi(m_j^{-1})$. We
represent $f$ as the telescopic series (converging to $f$ in  $\Lp$)
\be\label{tele}
f = P_{k+r}+(P_{N}-P_{k+r})+\sum_{j=0}^\infty(P_{m_{j+1}}-P_{m_{j}})
=:P_{k+r}+Q+ \sum_{j=0}^\infty Q_j, \ee
where $P_n\in\Pn$ are
the polynomials of best approximation of $f$, that is
$\|f-P_n\|_p=E_n(f)_p$. Hence, the polynomials $Q_j$ are of degree
$<m_{j+1}$ and satisfy $\|Q_j\|_p\le \phi_{j+1}+\phi_j\le 2\phi_j$ .

As in the proof of \lem{less},
for each  $0\leq i\leq k$,   put
$y_i(x):=x+(i-k/2)h\varphi(x)$ and recall that   $\wt_\delta(x)\leq\varphi(y)$ for all $x\in\Dom_\delta$,
$y\in \left[x-\delta\varphi(x)/2,x+\delta\varphi(x)/2\right]$ and
$0<\delta \leq 2$.

Then, using \ineq{potap}, we have
\begin{eqnarray*}
%\wkr(Q_j^{(r)}, t)_p  & \le &
%\sup_{0 < h\leq t}
 \|\wt_{kh}^r \Delta_{h\varphi}^k(Q_j^{(r)}, \cdot) \|_p
& \le &
\norm{\sum_{i=0}^k {\binom ki}
\varphi^r(y_i)\left|Q_j^{(r)}(y_i)\right|}{\Lp(\Dom_{kh})}
 \le  2^{k+1/p}\norm{\varphi^r Q_j^{(r)} }{p}\\
 & \le& c m_{j+1}^r \|Q_j\|_p \leq c m_{j}^r \phi_j .
\end{eqnarray*}

Therefore, if we denote $J:=\min\{j:1/{m_j}\le t\}$, then we have
\begin{eqnarray}\label{app1}
 \wkr\left( \sum_{j=J+1}^\infty Q_j^{(r)}, t\right)_p
 & \le & c
\sum_{j=J+1}^\infty
m_j^{r }\phi_j
 \le c \sum_{j=J+1}^\infty
\int_{m_j^{-1}}^{m_{j-1}^{-1}}\frac{\phi_j }{u^{r +1}}du \\ \nonumber
& \le &
c \sum_{j=J+1}^\infty\int_{m_j^{-1}}^{m_{j-1}^{-1}}
\frac{\phi (u)}{u^{r +1}}du
 \le c \int_{0}^{m_{J}^{-1}}\frac{\phi (u)}{u^{r +1}}du \\ \nonumber
&\le&
c \int_{0}^{t}\frac{\phi (u)}{u^{r +1}}du\,.
\end{eqnarray}

 We also note that, in a similar fashion, \ineq{potap} implies that
 \begin{eqnarray*}
 \sum_{j=0}^\infty\|\varphi^{\nu}Q_j^{(\nu)}\|_p&\le&c\sum_{j=0}^\infty m_j^{\nu}\phi_j\le c\int_0^1\frac{\phi(u)}{u^{\nu +1}}du < \infty ,
 \end{eqnarray*}
 for all $1\leq \nu\leq r$. This implies that $\{ \varphi^\nu P_{m_k}^{(\nu)} \}$ is a Cauchy sequence in $\Lp$ converging to some function $\varphi^\nu f_\nu \in\Lp$, and there is a subsequence of this sequence which converges pointwise almost everywhere to $\varphi^\nu f_\nu$. Moreover, we conclude that there exists a sequence $\{ n_l\} \subset \N$ such that, for each $1\leq \nu \leq r$, $\{  P_{n_l}^{(\nu)} \}$ converges pointwise almost everywhere to $ f_\nu$ and
 \be \label{timan}
 \lim_{l \to \infty} \|\varphi^\nu (f_\nu - P_{n_l}^{(\nu)})\|_p = 0 .
  \ee
  Now, considering $S := [-1+\e, 1-\e]$, $\e>0$, denoting $f_0 := f$ and using the argument \cite[section 6.1.3]{ti-book}, we write, for $x_0$ which is one of the points of convergence for all $1\leq \nu \leq r$,
 \begin{eqnarray*}
 \lefteqn{ f_{\nu-1}(x) - f_{\nu-1}(x_0) - \int_{x_0}^x f_\nu(t) dt}\\
 & =&
 f_{\nu-1}(x) - P_{n_l}^{(\nu-1)}(x)   - \left( f_{\nu-1}(x_0)-P_{n_l}^{(\nu-1)}(x_0)\right)  - \int_{x_0}^x \left( f_\nu(t)-P_{n_l}^{(\nu)}(t)\right)  dt ,
 \end{eqnarray*}
 and conclude that
 \[
 f_{\nu-1}(x) - f_{\nu-1}(x_0) = \int_{x_0}^x f_\nu(t) dt ,
 \]
 for almost all $x\in S$ and all $1\leq \nu \leq r$. %Since $f_0(x)=f(x)$ almost everywhere,
 Hence, recalling that $f_0 = f$, it follows that almost everywhere $f(x)$ is identical with a function possessing an absolutely continuous derivative of order $(r-1)$ and $f^{(r)} \in \Lp(S)$. Hence, differentiation of \ineq{tele} is justified. Also, \ineq{timan} implies that $f\in \B_p^r$.

We now continue with our estimates, and using \ineq{potap}   with  $\nu=r+k$ we have
\[
\| \varphi^{r+k} Q_j^{(r+k)} \|_p \leq c m_{j+1}^{r+k} \| Q_j\|_p \leq c m_j^{r+k} \phi_j .
\]
Hence, for  $0\leq j\leq J$, taking into account that $1/{m_j}>t/2$ and denoting $m_{-1} := N/2$, we have
 \begin{eqnarray} \label{app2}
 \lefteqn{ \wkr\left( \sum_{j=0}^{ J}  Q_j^{(r)}, t\right)_p }\\ \nonumber
 & \le & c t^{k } \sum_{j=0}^{J} \| \varphi^{r+k} Q_j^{(r+k)} \|_p
   \le   c t^{k} \sum_{j=0}^{J} m_j^{r+k} \phi_j
   \le  c t^{k} \sum_{j=0}^{J} \int_{m_j^{-1}}^{m_{j-1}^{-1}}\frac{\phi_j}{u^{k+r+1}}du \\ \nonumber
 &  \le &  c t^{k} \sum_{j=0}^{J} \int_{m_j^{-1}}^{m_{j-1}^{-1}}\frac{\phi(u)}{u^{k+r+1}}du
   \le   c t^{k}  \int_{m_J^{-1}}^{2/N}\frac{\phi(u)}{u^{k+r+1}}du
  \le   c t^{k}  \int_{t/2}^{1}\frac{\phi(u)}{u^{k+r+1}}du\\ \nonumber
   & \le & c t^{k}  \int_{t}^{1}\frac{\phi(u)}{u^{k+r+1}}du .
\end{eqnarray}

Finally, we have the estimate
 \be\label{app3}
\wkr(Q^{(r)}, t)_p \le c t^{k} \|\varphi^{k+r} Q^{(k+r)}\|_p \le c t^{k} N^{r+k} \|Q\|_p \le c t^{k} N^{r+k} E_{k+r}(f)_p .
\ee
 Note that if $N=k+r$, then $Q\equiv0$, so that the
left hand side of \ineq{app3} vanishes and no estimate is needed.

Now, the observation that
$\Delta^k_{h\varphi(x)}(P_{k+r}^{(r)},x)=0$, combined with
\ineq{app1}, \ineq{app2}, and \ineq{app3}, completes the proof of
the theorem for $r\ge1$.

For $r=0$, we write
$$
f=P_k+Q+\sum_{j=0}^JQ_j+(f-P_{m_{J+1}}),
$$
where $Q:=P_N-P_k$ and $Q_j:=P_{m_{j+1}}-P_{m_{j}}$ (see
\ineq{tele}), and complete the proof as above, just applying
\ineq{app2}, \ineq{app3}, and
\[
\|f-P_{m_{J+1}}\|_p = E_{m_{J+1}}(f)_p \leq \phi_{m_{J+1}}
\]
(\ie the same type of estimate as for $\|Q_{m_{J+1}}\|_p$).
\end{proof}

Choosing
\[
\phi(u) :=
\begin{cases}
E_n(f)_p , & \mbox{\rm if } 1/n \leq u < 1/(n+1) , \; n\geq N-1, \\
E_N(f)_p ,  & \mbox{\rm if } 1/N \leq u \leq 1 ,
\end{cases}
\]
in \thm{Theorem 3.199} we immediately get the following result which, in fact, is equivalent to \thm{Theorem 3.199}.

\renewcommand{\thesametheorem}{\ref{Theorem 3.199}${}^\prime$}

\begin{sametheorem} \label{primetheorem}
Given $1\leq p \leq \infty$, $k\in\N$, $r\in\N_0$, $N\in\N$.
If
$$
\sum_{n=1}^\infty  r n^{r-1} E_n(f)_p <+\infty
$$
(\ie  if $r=0$, this condition is not needed),
then one of the representatives of $f$ has a locally absolutely continuous derivative $f^{(r-1)}$,
$f\in  \B^r_p$, and
\begin{eqnarray*}
\omega_{k,r}^\varphi( f^{(r)},t)_p &\le&
c \sum_{n>\max\{N,1/t\}}rn^{r-1}E_n(f)_p\\&&\quad+ct^{k}\sum_{N\leq n\leq\max\{1/t, N\}}n^{k+r-1}E_n(f)_p
\\&&\quad+c(N)t^{k}E_{k+r}(f)_p,\quad t\in\left[0,1/2\right].
\end{eqnarray*}
If, in addition, $N\le k+r$, then
\begin{eqnarray*}
\omega_{k,r}^\varphi(f^{(r)},t)_p&\le&
c\sum_{n >\max\{N,1/t\}}r n^{r-1}E_n(f)_p\\&&\quad+ct^{k}\sum_{N\leq n\leq\max\{1/t,N\}}n^{k+r-1}E_n(f)_p,\quad t\in\left[0,1/2\right].
\end{eqnarray*}
\end{sametheorem}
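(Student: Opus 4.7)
The plan is to apply \thm{Theorem 3.199} to the step function $\phi$ exhibited just above the statement, interpreted (as the monotonicity required by $\Phi$ forces) so that $\phi(u) = E_n(f)_p$ on $[1/(n+1),1/n)$ for $n \ge N$ and $\phi(u) = E_N(f)_p$ on $[1/N, 1]$. I would first verify $\phi \in \Phi$: it is non-decreasing in $u$ because $E_n(f)_p$ is non-increasing in $n$; and $\phi(0+) = \lim_{n\to\infty} E_n(f)_p = 0$, which holds since $f \in \Lp[-1,1]$ (resp.\ $C[-1,1]$ when $p = \infty$) forces $E_n(f)_p \to 0$ by density of polynomials (when $r\ge 1$, this is also forced directly by the summability hypothesis). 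By construction, $E_n(f)_p \le \phi(1/n)$ for every $n \ge N$, so $\phi$ satisfies the decay hypothesis of \thm{Theorem 3.199}.

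The reduction then rests on the single calculation that, for any real $\alpha > 0$ and any $0 \le a \le b \le 1$,
\[
\int_a^b \frac{\phi(u)}{u^{\alpha+1}}\,du \;=\; \sum_{n} E_n(f)_p \int_{[1/(n+1),1/n)\cap [a,b]} \frac{du}{u^{\alpha+1}} \;\asymp\; \frac{1}{\alpha} \sum_{n} E_n(f)_p \bigl[(n+1)^\alpha - n^\alpha\bigr],
\]
where the sum runs over those $n \ge N$ with $[1/(n+1),1/n) \subset [a,b]$ and the (at most two) boundary intervals contribute only a constant multiple of a single generic term that can be absorbed into $c$. Since $(n+1)^\alpha - n^\alpha \asymp \alpha\,n^{\alpha - 1}$, applying this identity with $\alpha = r$ on $[0,1]$ shows that the integrability hypothesis $\int_0^1 r\phi(u) u^{-r-1}\,du < \infty$ of \thm{Theorem 3.199} is equivalent to the hypothesis $\sum_n r n^{r-1}E_n(f)_p < \infty$ of the present theorem (both being vacuous when $r=0$, since the factor $r$ kills them). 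The same identity with $\alpha = r$ on $[0,t]$ converts the first term on the right of the conclusion of \thm{Theorem 3.199} into $\sum_{n > \max\{N, 1/t\}} r n^{r-1} E_n(f)_p$, and with $\alpha = k+r$ on $[t,1]$ converts the second term into $t^k \sum_{N \le n \le \max\{N, 1/t\}} n^{k+r-1} E_n(f)_p$. The remainder $c(N) t^k E_{k+r}(f)_p$ is unchanged, and the sharper conclusion when $N \le k+r$ is inherited directly.

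The one real obstacle is bookkeeping at the boundary index $n^\star \sim 1/t$: the interval $[1/(n^\star+1), 1/n^\star)$ straddles $t$, and one must verify that attaching it to either the $[0,t]$ or the $[t,1]$ piece does not change the stated order of the two sums. Since on that interval $u \asymp 1/n^\star$, the stray contribution is comparable to a single generic term of each sum and is absorbed into $c$. The edge cases $r = 0$ (where the first sum is trivially empty because of the factor $r$) and $t \ge 1/N$ (where the only surviving term is the $E_{k+r}(f)_p$ tail) cause no additional trouble, and the proof of Theorem~\ref{primetheorem} is then complete.
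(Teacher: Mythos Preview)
Your proposal is correct and follows exactly the paper's approach: define the step function $\phi$ (with the interval convention fixed as you did, since the paper's ``$1/n \le u < 1/(n+1)$'' is a typo) and apply \thm{Theorem 3.199}. The paper dispenses with all of this in the single phrase ``we immediately get,'' so your integral-to-sum computation and boundary bookkeeping at $n^\star \sim 1/t$ simply spell out what the paper leaves implicit.
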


Another immediate corollary of \thm{Theorem 3.199} with $\phi(t) := t^\alpha$ and $N=k+r$ is the following result which is an inverse to \ineq{estim} .

\begin{corollary}\label{r+k} Let $r\in\N_0$, $k\in\N$ and
  $r<\alpha<r+k$, and let $f\in L_p[-1,1]$, $1\leq p \leq \infty$. If
\be\label{aleph}
 E_n(f)_p\le n^{-\alpha}, \quad n\ge N, \ee for some
$N\ge k+r$, then $f\in\B^r_p$ and
\[
\wkr(f^{(r)},t)_p\le
c(\alpha,k,r)t^{\alpha-r}+c(N,k,r)t^kE_{k+r}(f)_p,\quad t>0.
\]
In particular, if  $N = k+r$, then $\ineq{aleph}$ implies that
 $f\in\B^r_p$ and
\[
\wkr(f^{(r)},t)_p\le c(\alpha,k,r)t^{\alpha-r},\quad t>0.
\]
\end{corollary}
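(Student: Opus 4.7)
The corollary is stated in the paper itself as an immediate consequence of \thm{Theorem 3.199} applied with $\phi(t):=t^\alpha$ and, for the ``in particular'' clause, $N=k+r$. The plan is essentially to carry out the two integral calculations and to patch up the range $t>1/2$.

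First I would verify that $\phi(u):=u^\alpha$ satisfies the hypothesis $\int_0^1 r\phi(u)/u^{r+1}du <\infty$ of \thm{Theorem 3.199}. Indeed, if $r\ge 1$ this integral equals $r\int_0^1 u^{\alpha-r-1}du = r/(\alpha-r)$, which is finite because $\alpha>r$; and if $r=0$ no condition is needed. Since, by hypothesis, $E_n(f)_p\le n^{-\alpha}=\phi(1/n)$ for $n\ge N$, \thm{Theorem 3.199} then guarantees that (the appropriate representative of) $f$ lies in $\B^r_p$.

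Next I would compute the two integrals appearing in the conclusion of \thm{Theorem 3.199}, restricted to $t\in(0,1/2]$. For the first, as above,
\[
\int_0^t \frac{r\phi(u)}{u^{r+1}}du = \frac{r}{\alpha-r}\,t^{\alpha-r},
\]
(which is zero when $r=0$). For the second, since $\alpha-k-r<0$,
\[
t^k\int_t^1\frac{\phi(u)}{u^{k+r+1}}du = t^k\int_t^1 u^{\alpha-k-r-1}du
=\frac{t^{\alpha-r}-t^k}{k+r-\alpha}\le \frac{1}{k+r-\alpha}\,t^{\alpha-r}.
\]
Substituting these into the estimate of \thm{Theorem 3.199} gives, for $t\in(0,1/2]$,
\[
\wkr(f^{(r)},t)_p\le c(\alpha,k,r)\,t^{\alpha-r}+c(N,k,r)\,t^k E_{k+r}(f)_p,
\]
which is the first claimed inequality on that range. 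For $t>1/2$, I would invoke \ineq{larget} (monotonicity of $\wkr$ and the fact that $\wkr(f^{(r)},t)_p=\wkr(f^{(r)},2/k)_p$ for $t\ge 2/k$) together with the already established estimate at $t=1/2$, and absorb the resulting constants into the ones above, using that $t^{\alpha-r}$ is bounded below on $[1/2,\infty)$ by a positive constant depending only on $\alpha$ and $r$. This extends the bound to all $t>0$.

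Finally, for the ``in particular'' statement with $N=k+r$, I would invoke the second conclusion of \thm{Theorem 3.199} (available precisely when $N\le k+r$), which omits the $t^k E_{k+r}(f)_p$ term, and apply the same two integral computations and the same $t>1/2$ extension. There is no real obstacle here; the only minor issue worth noting is the threshold behavior at $\alpha=r+k$, which is excluded by hypothesis and is exactly what ensures convergence of the second integral, so the strict inequality $\alpha<r+k$ must be used essentially.
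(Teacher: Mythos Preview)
Your proposal is correct and takes essentially the same approach as the paper, which simply states the result as an immediate corollary of \thm{Theorem 3.199} with $\phi(t):=t^\alpha$ (and $N=k+r$ for the second part); you have merely filled in the two integral computations and addressed the extension to $t>1/2$, which the paper leaves implicit. One small remark: for $k\le 3$ the interval $(1/2,2/k)$ is nonempty, and to bound $\wkr(f^{(r)},t)_p$ there by the value at $t=1/2$ you should invoke \cor{maincorol} rather than monotonicity alone, but this is a trivial adjustment.
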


Corollaries~\ref{corestim} and \ref{r+k} imply the following constructive characterization result.

\begin{corollary}
Let $r\in\N_0$, $k\in\N$, $r<\alpha<r+k$, and let $f\in L_p[-1,1]$, $1\leq p\leq \infty$.
Then
$E_n(f)_p \leq c n^{-\alpha}$, for all $ n\ge k+r$,
if and only if $f\in\B^r_p$ and
$\wkr(f^{(r)},t)_p\le c t^{\alpha-r}$, $t>0$.
\end{corollary}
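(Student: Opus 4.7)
The plan is that this corollary is a direct combination of the two preceding results, Corollary~\ref{corestim} (a direct Jackson-type estimate) and Corollary~\ref{r+k} (its matching inverse), and no additional machinery is needed. The only thing to check is that the hypotheses line up cleanly in both directions, and that in the inverse direction we may take $N = k+r$.

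For the implication ``$\Longleftarrow$'', I would assume that $f \in \B^r_p$ and $\wkr(f^{(r)}, t)_p \le c\, t^{\alpha - r}$ for all $t > 0$. Since $\alpha > r$ by hypothesis, Corollary~\ref{corestim} applies verbatim with the given $k$ and $\alpha$, and immediately yields $E_n(f)_p \le c\, n^{-\alpha}$ for every $n \ge k+r$. No further estimation is required in this direction.

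For the implication ``$\Longrightarrow$'', I would assume $E_n(f)_p \le c\, n^{-\alpha}$ for all $n \ge k + r$, and apply Corollary~\ref{r+k} with the choice $N = k + r$. Since the hypothesis $r < \alpha < r + k$ of the present corollary is exactly the restriction on $\alpha$ required by Corollary~\ref{r+k}, the special case $N = k+r$ recorded there gives both $f \in \B^r_p$ and the bound $\wkr(f^{(r)}, t)_p \le c(\alpha, k, r)\, t^{\alpha - r}$ for all $t > 0$, with no residual term involving $E_{k+r}(f)_p$. This finishes the equivalence.

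Since this is a straightforward assembly of two previously established corollaries, there is no real obstacle; the only minor point worth flagging explicitly in the write-up is that the choice $N = k+r$ in Corollary~\ref{r+k} removes the additional $c(N, k, r) t^k E_{k+r}(f)_p$ term and gives the clean rate $t^{\alpha - r}$ needed for the ``if and only if'' statement.
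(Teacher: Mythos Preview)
Your proposal is correct and matches the paper's approach exactly: the paper simply states that this corollary follows from Corollaries~\ref{corestim} and~\ref{r+k}, and you have spelled out precisely how, including the choice $N=k+r$ needed to eliminate the residual $t^k E_{k+r}(f)_p$ term.
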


\begin{bibsection}
\begin{biblist}

\bib{dl}{book}{
   author={DeVore, Ronald A.},
   author={Lorentz, George G.},
   title={Constructive approximation},
   series={Grundlehren der Mathematischen Wissenschaften [Fundamental
   Principles of Mathematical Sciences]},
   volume={303},
   publisher={Springer-Verlag, Berlin},
   date={1993},
   pages={x+449},
   isbn={3-540-50627-6},
%   review={\MR{1261635 (95f:41001)}},
}

\bib{d20}{article}{
   author={Ditzian, Z.},
   title={Polynomial approximation and $\omega^r_\phi(f,t)$ twenty
   years later},
   journal={Surv. Approx. Theory},
   volume={3},
   date={2007},
   pages={106--151},
   issn={1555-578X},
 %  review={\MR{2342231 (2008f:41010)}},
}

\bib{dt}{book}{
  author={Ditzian, Z.},
  author={Totik, V.},
  title={Moduli of smoothness},
  series={Springer Series in Computational Mathematics},
  volume={9},
  publisher={Springer-Verlag},
  place={New York},
  date={1987},
  pages={x+227},
  isbn={0-387-96536-X},
 % review={MR 89h:41002},
}
\bib{sh}{book}{
  author={Dzyadyk, V. K.},
  author={Shevchuk, I. A.},
  title={Theory of Uniform Approximation of Functions by Polynomials},
  publisher={Walter de Gruyter},
  place={Berlin},
  date={2008},
%  pages={x+?},
%  isbn={??},
  }

%  \bib{klps}{article}{
%   author={Kopotun, K. A.},
%   author={Leviatan, D.},
%   author={Prymak, A.},
%   author={Shevchuk, I. A.},
%   title={Uniform and pointwise shape preserving approximation by algebraic
%   polynomials},
%   journal={Surv. Approx. Theory},
%   volume={6},
%   date={2011},
%   pages={24--74},
%   issn={1555-578X},
% %  review={\MR{2832606 (2012h:41021)}},
%}

%\bib{kls1}{article}{
% author={Kopotun, K. A.},
% author={Leviatan, D.},
% author={Shevchuk, I. A.},
% title={Convex Polynomial Approximation in the Uniform Norm: Conclusion},
% journal={Can. J. Math.},
% volume={57},
% date={2005},
% pages={1224--1248},
% }
 %pages={1224--1248}}

%\bib{kls2}{book}{
% author={Kopotun, K. A.},
% author={Leviatan, D.},
% author={Shevchuk, I. A.},
% title={Are the degrees of best (co)convex and unconstrained polynomial
%approximation the same?},
% place={Acta Math. Hungar. {\bf123}, 273--290},
% date={2009}
% }

%\bib{KLS2009}{article}{
%   author={Kopotun, K. A.},
%   author={Leviatan, D.},
%   author={Shevchuk, I. A.},
%   title={Are the degrees of best (co)convex and unconstrained polynomial
%   approximation the same?},
%   journal={Acta Math. Hungar.},
%   volume={123},
%   date={2009},
%   number={3},
%   pages={273--290},
%   %%issn={0236-5294},
%   %%review={\MR{2500917 (2010c:41003)}},
%   %%doi={10.1007/s10474-009-8111-4},
%}

\bib{kls3}{article}{
   author={Kopotun, K. A.},
   author={Leviatan, D.},
   author={Shevchuk, I. A.},
   title={Are the degrees of the best (co)convex and unconstrained
   polynomial approximations the same? II},
   journal={Ukrainian Math. J.},
   volume={62},
   date={2010},
   number={3},
   pages={420--440},
%   issn={0041-5995},
%   review={\MR{2888608}},
%   doi={10.1007/s11253-010-0362-2},
}

 \bib{sh-book}{book}{
   author={Shevchuk, I. A.},
   title={Polynomial approximation and traces of functions continuous on a segment},
   publisher={Naukova Dumka, Kiev},
    language={Russian},
   date={1992}
}

\bib{ti-book}{book}{
   author={Timan, A. F.},
   title={Theory of approximation of functions of a real variable},
   note={Translated from the Russian by J. Berry;
   Translation edited and with a preface by J. Cossar;
   Reprint of the 1963 English translation},
   publisher={Dover Publications Inc.},
   place={New York},
   date={1994},
   pages={viii+631},
   isbn={0-486-67830-X},
  % review={\MR{1262128 (94j:41001)}},
}

\end{biblist}
\end{bibsection}

\end{document}